\documentclass[12pt]{amsart}
\usepackage[latin1]{inputenc}
\usepackage{color}
\usepackage{enumerate}
\usepackage{amssymb}
\usepackage[all]{xy}
\usepackage{hyperref}


\vfuzz2pt 
\hfuzz2pt 
\newtheorem{theorem}{Theorem}[section]
\newtheorem{lemma}[theorem]{Lemma}

\newtheorem{proposition}[theorem]{Proposition}
\newtheorem{corollary}[theorem]{Corollary}
\newtheorem{fact}[theorem]{Fact}
\theoremstyle{definition}

\newtheorem{example}[theorem]{Example}

\theoremstyle{remark}

\newtheorem{question}[theorem]{Question}



\newcommand{\set}[1]{\left\{#1\right\}}

\newcommand{\To}{\longrightarrow}

\newcommand{\U}{\mathcal{U}}
\newcommand{\nat}{\mathbb{N}}
\newcommand{\iten}{\ensuremath{\widehat{\otimes}_\varepsilon}}
\newcommand{\pten}{\ensuremath{\widehat{\otimes}_\pi}}


\newcommand{\R}{\mathbb{R}}
\newcommand{\N}{\mathbb{N}}

\newcommand{\F}{\mathcal{F}}

\newcommand{\I}{\mathcal{I}}

\def\natu{{\mathbb N}}

\DeclareMathOperator{\co}{co}

\newcommand{\ext}[1]{\operatorname{ext}\left(#1\right)}

\DeclareMathOperator{\supp}{supp}



\setcounter{tocdepth}{1}

\textwidth=36cc
\baselineskip 16pt
\textheight 620pt
\headheight 20pt
\headsep 20pt
\usepackage{color}
\usepackage{enumerate}
\topmargin 0pt
\footskip 40pt
\parskip 0pt
\oddsidemargin 10pt
\evensidemargin 10pt



\begin{document}
\setcounter{tocdepth}{1}


\title{$L$-orthogonal elements and $L$-orthogonal sequences}
\author[Avil\'es]{Antonio Avil\'es}
\address[Avil\'es]{Universidad de Murcia, Departamento de Matem\'{a}ticas, Campus de Espinardo 30100 Murcia, Spain
	\newline
	\href{https://orcid.org/0000-0003-0291-3113}{ORCID: \texttt{0000-0003-0291-3113} } }
\email{\texttt{avileslo@um.es}}

\author[Mart\'inez-Cervantes]{Gonzalo Mart\'inez-Cervantes}
\address[Mart\'inez-Cervantes]{Universidad de Murcia, Departamento de Matem\'{a}ticas, Campus de Espinardo 30100 Murcia, Spain
	\newline
	\href{http://orcid.org/0000-0002-5927-5215}{ORCID: \texttt{0000-0002-5927-5215} } }	

\email{gonzalo.martinez2@um.es}

%

%

\author[Rueda Zoca]{Abraham Rueda Zoca}
\address[Rueda Zoca]{Universidad de Murcia, Departamento de Matem\'{a}ticas, Campus de Espinardo 30100 Murcia, Spain
	\newline
	\href{https://orcid.org/0000-0003-0718-1353}{ORCID: \texttt{0000-0003-0718-1353} }}
\email{\texttt{abraham.rueda@um.es}}
\urladdr{\url{https://arzenglish.wordpress.com}}

\thanks{The three authors have been supported by MTM2017-86182-P (Government of Spain, AEI/FEDER, EU) and Fundaci\'on S\'eneca, ACyT Regi\'on de Murcia grant 20797/PI/18. G. Mart\'{\i}nez-Cervantes has been co-financed by the European Social Fund (ESF) and the Youth European Initiative (YEI) under the Spanish S\'eneca Foundation (CARM) (ref. 21319/PDGI/19). A. Rueda Zoca was supported by Juan de la Cierva-Formaci\'on fellowship FJC2019-039973, by MICINN (Spain) Grant PGC2018-093794-B-I00 (MCIU, AEI, FEDER, UE), and by Junta de Andaluc\'ia Grants A-FQM-484-UGR18 and FQM-0185.}

\keywords{$L$-orthogonal element; $L$-orthogonal sequence; $\ell_1$-type sequence; octahedral norm; selective ultrafilter; Ramsey ultrafilter; $Q$-point}

\subjclass[2020]{Primary 46B20, 46B04; Secondary 46B26, 54A20}

\begin{abstract} 
Given a Banach space $X$, we say that a sequence $\{x_n\}$ in the unit ball of $X$ is $L$-orthogonal if $\Vert x+x_n\Vert\rightarrow 1+\Vert x\Vert$ for every $x\in X$. On the other hand, an element $x^{**}$ in the bidual sphere is said to be $L$-orthogonal (to $X$) if $\|x+x^{**}\|= 1+\Vert x\Vert$ for every $x\in X$. 	
A result of V. Kadets, V. Shepelska and D. Werner asserts that a Banach space contains an isomorphic copy of $\ell_1$ if and only if there exists an equivalent renorming with an $L$-orthogonal sequence, whereas a result of G. Godefroy claims that containing an isomorphic copy of $\ell_1$ is equivalent to the existence of an equivalent renorming with $L$-orthogonals in the bidual.
The aim of this paper is to clarify the relation between $L$-orthogonal sequences and $L$-orthogonal elements. Namely,	
we study whether every $L$-orthogonal sequence contains $L$-orthogonal elements in its weak*-closure. We provide an affirmative answer whenever the ambient space has small density character. Nevertheless, we show that, surprisingly, the general answer is independent of the usual axioms of set theory.
We also prove that, even though the set of $L$-orthogonals is not a vector space, this set contains infinite-dimensional Banach spaces when the surrounding space is separable.
%
\end{abstract}

\maketitle

\section{Introduction}

The presence of sequences equivalent to the $\ell_1$ basis and the relation to orthogonality conditions in the corresponding bidual has been a central topic in Banach space theory. For instance, a celebrated characterisation of containment of $\ell_1$ due to B. Maurey reads as follows: a separable Banach space $X$ contains an isomorphic copy of $\ell_1$ if, and only if, there exists a non-zero element $x^{**}\in X^{**}$ so that
$$\Vert x+x^{**}\Vert=\Vert x-x^{**}\Vert$$
holds for every $x\in X$ \cite{maurey}. Soon afterwards, several related conditions appeared at the end of the eighties by a series of works of G. Godefroy, N. J. Kalton and B. Maurey. For instance, it was proved in \cite[Theorem II.4]{god} that a Banach space $X$ contains an isomorphic copy of $\ell_1$ if, and only if, $X$ admits an equivalent renorming so that there are non-zero elements $x^{**}\in X^{**}$ satisfying
\begin{equation}\label{ecua:lortointro}
\Vert x+x^{**}\Vert=1+\Vert x\Vert
\end{equation}
holds for every $x\in X$ (a version for separable Banach spaces was obtained previously by G. Godefroy and B. Maurey, see the comment preceding \cite[Theorem II.4]{god}). For simplicity, and according to the notation in \cite{loru}, an element $x^{**}$ satisfying \eqref{ecua:lortointro} will be said to be an \textit{$L$-orthogonal element}.

This condition was also studied, in contrast to the previous result, from an isometric point of view. In \cite[Lemma 9.1]{gk} it was proved that a separable Banach space $X$ admits an $L$-orthogonal element if and only if the norm of $X$ is \textit{octahedral}, i.e.~if given any finite-dimensional subspace $F$ of $X$ and every $\varepsilon>0$ there is always an element $x\in S_X$ so that $\Vert y+\lambda x\Vert\geq (1-\varepsilon)(\Vert y\Vert+\vert\lambda\vert)$ holds for every $y\in F$ and every $\lambda\in\mathbb R$. 

The strong connection between octahedral norms and other properties of Banach spaces like the Daugavet property \cite[Lemma 2.8]{kssw} and the diameter two properties \cite[Lemma 2.1]{blrjfa} has motivated further research on octahedral norms (see e.g. \cite{hlp,llr2,lr,pr}). However, the question whether octahedrality implies the existence of non-zero elements enjoying \eqref{ecua:lortointro} has remained open until the recent work \cite{loru}, where an octahedral Banach space of density character $2^\mathfrak{c}$ is constructed for which no $L$-orthogonal elements exist.

In this paper we address, inspirated by the work with sequences done in \cite[Lemma 1.4]{maurey}, the question whether there are $L$-orthogonal elements in Banach spaces in which the octahedrality condition comes from a unique $\ell_1$ sequence. To be more precise, given a Banach space $X$, we say that a sequence $\{x_n\}\subseteq B_X$ is an \textit{$L$-orthogonal sequence} if $\Vert x+x_n\Vert\rightarrow 1+\Vert x\Vert$ holds for every $x\in X$. The connection between this concept and Maurey's work \cite{maurey} is that, if one defines the function $\tau(x):=\lim_n \Vert x+x_n\Vert$, then $\tau$ defines a type of Maurey satisfying $\tau(x)=1+\Vert x\Vert$ for every $x\in X$. For this reason, $L$-orthogonal sequences receive in \cite{ksw} the name \textit{$\ell_1$-type}. In that paper, V. Kadets, V. Shepelska and D. Werner prove  that every Banach space containing an isomorphic copy of $\ell_1$ admits an equivalent renorming with $L$-orthogonal sequences \cite[Theorem 4.3]{ksw}.

If the norm of a separable Banach space $X$ is octahedral then there exists an $L$-orthogonal sequence in $X$, see e.g.~\cite[Theorem 1.1]{ksw}. It is easy to give examples of nonseparable spaces that have $L$-orthogonal elements but with no $L$-orthogonal sequences. We focus on the converse implication.

\begin{question}\label{question:general}
Let $X$ be a Banach space with an $L$-orthogonal sequence $\{x_n\}$. Is there an $L$-orthogonal element $u$ in $X^{\ast\ast}$?
\end{question}

The natural place to look for this $u$ is within the weak$^\ast$ cluster points of the sequence, so we formulate a second question:

\begin{question}\label{question:main}
Let $X$ be a Banach space with an $L$-orthogonal sequence $\{x_n\}$. Is there an $L$-orthogonal element $u$ in the $w^*$-closure of $\{x_n\}$?
\end{question}

The closure of $\{x_n\}$ is the set of limits of the sequence under ultrafilters, so what we are asking is whether the $w^*$-limit of an $L$-orthogonal sequence under an ultrafilter will be an $L$-orthogonal element. 

We present two different positive results which, with different approaches, take advantage of Maurey's techniques in \cite{maurey} together with some set theory, allowing us to select appropiate $w^*$-cluster points to obtain $L$-orthogonals. In Section \ref{sect:cardinalp} we prove that if $X$ has an $L$-orthogonal sequence $\{x_n\}$ and the density character of $X$ does not exceed the pseudointersection number $\mathfrak{p}$, then there exists an $L$-orthogonal $w^*$-cluster point of $\{x_n\}$. Our second result removes any restriction on the density of $X$ at the expense of using devices that require extra axioms. In Section \ref{sect:selective} we show that the $w^\ast$-limit of $\{x_n\}$ through a selective ultrafilter is an $L$-orthogonal element of the bidual. Selective ultrafilters exist in some models of set theory, for instance under the Continuum Hypothesis. It follows that a general positive answer to Question \ref{question:main}, and hence also to Question \ref{question:general}, is consistent with the usual axioms of set theory. This gives, as a particular case, a consistent answer to a question posed in \cite{ksw}.

In Section \ref{section:counterexamples} we turn to the negative side. First we exhibit an $L$-orthogonal sequence $\{x_n\}$ in $\ell_1$ such that $0$ is in the weak closure of $\{x_n\}$. This reveals that some selection of cluster points is necessary even in the separable case to obtain $L$-orthogonals in the $w^*$-closure of an $L$-orthogonal sequence. In the second place, we prove that a negative answer to Question \ref{question:main} is also consistent: In some models of set theory there are $L$-orthogonal sequences without $L$-orthogonal $w^\ast$-cluster points. What we show is that there is an $L$-orthogonal sequence for which none of whose limits through ultrafilters that are not $Q$-points are ever $L$-orthogonal. Moreover, notice that there are models of set theory where no ultrafilter is a $Q$-point.

The conclusion is that Question \ref{question:main} is undecidable in ZFC. For Question \ref{question:general} we give a consistent positive answer, but we leave open whether a negative answer may also be consistent. Another issue is that we consider selective ultrafilters for the positive result, but the larger class of $Q$-points for the negative one. When taking limits through ultrafilters that are $Q$-points but not selective, we do not know if we always get $L$-orthogonal elements. 

In the final Section \ref{section:abundLorto} we analyse how large the set of $L$-orthogonal elements may be, when it is nonempty. Making use of tensor product spaces, we prove that if $X$ is a separable Banach space with octahedral norm, then the set of multiple scalars of $L$-orthogonal elements in $X^{**}$ contains infinite-dimensional spaces, actually an isometric copy of the dual of any separable and uniformly convex Banach space with the metric approximation property which is finitely representable in $\ell_1$ (Theorem \ref{theo:espaciaorto}).

The content of other sections in the article is the following. In Section \ref{sect:examples} we give some basic natural examples of $L$-orthogonal sequences and $L$-orthogonal elements and some cases when they do not exist. Section \ref{sect:maurey} recaps some technical material from \cite{maurey} that will be of key use in some proofs. In Section \ref{section:Lebesgue} we prove a Lebesgue dominated convergence theorem for convergence with respect to the ideal of all subsets of $\mathbb{N}$ that have uniformly bounded intersections with a partition into finite sets. This will be auxiliary in Section \ref{section:counterexamples} but may have an independent interest in the line of the study of Lebesgue filters \cite{KadetsLeonov,KadetsLeonov2}. 

We will consider only real Banach spaces. Let us fix some general notation: Given a Banach space $X$, $B_X = \{x\in X : \|x\|\leq 1\}$ and $S_X=\{x\in X : \|x\|=1\}$ will be the closed unit ball and the unit sphere of $X$, respectively. Given a set $\Omega$, $\mathcal{P}(\Omega) = \{A : A\subseteq \Omega\}$ will be the power set of $\Omega$, and for $A\subset \Omega$, $\chi_A$ will be the characteristic function of $A$, so that $\chi_A(x)=1$ if $x\in A$ and $\chi_A(x)=0$ if $x\in\Omega\setminus A$. By a measure we mean a \textit{regular Borel signed measure}.

\section{Examples of $L$-orthogonal sequences and $L$-orthogonal elements}\label{sect:examples}

The most canonical example of an $L$-orthogonal sequence is the unit basis of $\ell_1$, or more generally any sequence of different vectors from the canonical basis of $\ell_1(\Gamma)$. In this case $\ell_1(\Gamma)^{**}$ is identified with the space of finitely additive signed measures defined on  $\mathcal{P}(\Gamma)$, and the $L$-orthogonal elements in the sphere of $\ell_1(\Gamma)^{**}$ correspond to the measures that vanish on all finite sets. The $w^*$-cluster points of the canonical basis, identified with $\{0,1\}$-valued measures given by ultrafilters, are examples of such $L$-orthogonal vectors.

In a space $L_1(\mu)$ with $\mu$ non-atomic, an $L$-orthogonal sequence can be constructed as  $\{{\mu(A_n)}^{-1}\chi_{A_n}\}$ where  $\{A_n\}$ is a decreasing sequence of nonzero sets whose measures converge to 0. The bidual space $L_1(\mu)^{**}$ is identified with the space of finitely additive signed measures on the measure algebra $\mathcal{M}_\mu$, which in turn is identified with the space of regular Borel signed measures on its Stone space $St(\mathcal{M}_\mu)$. Being an $L$-orthogonal element corresponds to being a signed measure of total variation 1 that is orthogonal to $\mu$.  Any $w^*$-cluster point of the above sequence will correspond to a probability measure $\nu$ that will be concentrated on $A_n$ for all $n$, and it will be again an $L$-orthogonal element.

In a $c_0$-sum of uncountably many copies of $\ell_1$ there are no $L$-orthogonal sequences $(x_n)$ because, as soon as we take a vector $x$ whose support is disjoint from the supports of all $x_n$ we would get $\|x+x_n\| = \max\{\|x\|,\|x_n\|\}$. However, the bidual space would be an $\ell_\infty$-sum of copies of $\ell_1^{**}$ and if we take an $L$-orthogonal element for $\ell_1$ in each coordinate, we obtain an $L$-orthgonal element.

In the case of $C(K)$ spaces, playing with the signs is necessary. The Rademacher functions on $C[0,1]$, or a sequence of coordinate functions on $C(\{-1,1\}^\Gamma)$ are the most natural examples of $L$-orthogonal sequences in this context, and the $w^*$-cluster points of such sequences give $L$-orthogonal elements.

\begin{proposition}
	The space $C(K)$ has an $L$-orthogonal element if and only if $K$ has no isolated points.
\end{proposition}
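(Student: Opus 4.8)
The plan is to work throughout with the canonical identifications $C(K)^\ast = M(K)$ and $C(K)^{\ast\ast}=M(K)^\ast$, and to exploit the isometric embedding $\ell_1(K)\hookrightarrow M(K)$ sending the unit vector $e_t$ to the Dirac mass $\delta_t$. Its adjoint is the restriction map $q\colon C(K)^{\ast\ast}\to\ell_\infty(K)$, $q(u)=(\langle u,\delta_t\rangle)_{t\in K}$; since the embedding is isometric, $q$ is a metric surjection, so every $w\in B_{\ell_\infty(K)}$ equals $q(u)$ for some $u\in B_{C(K)^{\ast\ast}}$ with $\norm u=\norm{w}_\infty$ (Hahn--Banach extension of $\delta_t\mapsto w(t)$). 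This is the only structural fact I will need, and it is what lets me evaluate candidate bidual elements against the very concrete functionals $\delta_t$.

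For the implication ``an $L$-orthogonal element exists $\Rightarrow$ no isolated points'' I would argue by contraposition. If $t_0$ is isolated then $\{t_0\}$ is clopen and $C(K)$ splits isometrically as $\Real\oplus_\infty C(K\setminus\{t_0\})$, hence $C(K)^{\ast\ast}=\Real\oplus_\infty C(K\setminus\{t_0\})^{\ast\ast}$; write a putative $L$-orthogonal element as $u=(a,v)$ with $\max\{\abs a,\norm v\}=1$. Testing the defining identity against $f=\chi_{\{t_0\}}=(1,0)$ and against $-f$ forces $\max\{\abs{1+a},\norm v\}=\max\{\abs{a-1},\norm v\}=2$; as $\norm v\le 1$ this gives $\abs{1+a}=\abs{a-1}=2$, which has no solution. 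So no $L$-orthogonal element can exist.

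For the converse, suppose $K$ has no isolated points. The key topological input I would invoke is a classical resolvability theorem (Hewitt): a crowded locally compact Hausdorff space admits two disjoint dense subsets $D_+,D_-$. Put $w=\chi_{D_+}-\chi_{D_-}\in\ell_\infty(K)$, so $\norm{w}_\infty=1$, and lift it through $q$ to some $u\in B_{C(K)^{\ast\ast}}$ with $\langle u,\delta_t\rangle=w(t)$. Given $f\in C(K)$ and $\eps>0$, pick a point $t_0$ where $\abs f$ attains its maximum; if $f(t_0)=\norm f$, use the density of $D_+$ to choose $p\in D_+$ in a neighbourhood where $f>\norm f-\eps$, and evaluate $\langle f+u,\delta_p\rangle=f(p)+w(p)>\norm f-\eps+1$; if instead $f(t_0)=-\norm f$, use $D_-$ symmetrically. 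Letting $\eps\to 0$ yields $\norm{f+u}\ge 1+\norm f$, while the reverse inequality is the triangle inequality, so $u$ is $L$-orthogonal (the case $f=0$ also giving $\norm u=1$).

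Finally, the verification above is deliberately arranged to pair the fixed element $u$ against the fixed Dirac masses $\delta_p$, so that no interchange of limits is involved; this is precisely what makes the $C(K)$ case behave well, in contrast to the delicate cluster-point phenomena that occupy the rest of the paper. I therefore expect the real content to sit entirely in producing the densely oscillating datum $w$, that is, in the resolvability of crowded compact spaces and its packaging as an element of $B_{\ell_\infty(K)}$; everything else is routine. The one point I would double-check carefully is that $q$ genuinely maps the unit ball \emph{onto} the unit ball (the isometric-embedding/Hahn--Banach argument), since the whole construction hinges on lifting $w$ to a norm-one element of the bidual.
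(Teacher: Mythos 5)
Your proposal is correct, and while the forward implication is essentially the paper's argument in disguise, your converse follows a genuinely different route. For the forward direction the paper also tests an alleged $L$-orthogonal element against $\pm\chi_{\{p\}}$ for an isolated point $p$, but it does so by hand, splitting an almost-norming measure $\nu$ as $(\nu-t\delta_p)+t\delta_p$ and deducing $x^{**}(\delta_p)=\pm 1$; your packaging via $C(K)=\Real\oplus_\infty C(K\setminus\{t_0\})$ and its bidual is the same computation in cleaner clothing. For the converse the paper writes down a canonical element, $x^{**}(\nu)=\nu_a(K)-\nu_n(K)$ using the atomic/atomless decomposition of measures, and its topological input is the existence of atomless probability measures supported in any nonempty open set of a crowded compactum (Semadeni); you instead invoke Hewitt's resolvability theorem (crowded locally compact Hausdorff spaces admit two disjoint dense subsets -- this does apply here) and lift $\chi_{D_+}-\chi_{D_-}$ through the metric surjection $C(K)^{**}\to\ell_\infty(K)$ dual to $\ell_1(K)\hookrightarrow M(K)$. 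Both routes ultimately rest on a classical fact expressing non-scatteredness of crowded compacta; the paper's element is explicit and choice-free and is evaluated against atomless measures, whereas yours requires choosing a resolution and a Hahn--Banach extension but only ever pairs the element with Dirac masses, which keeps the verification entirely elementary. Your lifting step (that the adjoint of the isometric embedding $e_t\mapsto\delta_t$ maps $B_{C(K)^{**}}$ onto $B_{\ell_\infty(K)}$) is the standard Hahn--Banach extension argument and is sound.
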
 

\begin{proof}
	Suppose that $K$ has an isolated point $p$ and $x^{**}$ was an $L$-orthogonal element. Fix $\varepsilon>0$ and take $\nu\in B_{C(K)^{*}}$ such that $(x^{**} + \chi_{\{x\}})(\nu) > 2-\varepsilon$. Call $t= \nu\{x\}$. We have that $$2-\varepsilon <(x^{**} + \chi_{\{x\}})(\nu) = x^{**}(\nu-t\delta_x) + x^{**}(t\delta_x) + t.$$  
	But $\|\nu-t\delta_x\| = \|\nu\|-|t| \leq 1-t$, so  $x^{**}(t\delta_x)>1-\varepsilon$. Since $|t|\leq 1$ and this holds for every $\varepsilon>0$, the conclusion is that $|x^{**}(\delta_x)|=1$. In fact, $x^{**}(\delta_x)=1$, because otherwise the chain of inequalities displayed above would give $2-\varepsilon < x^{**}(\nu-t\delta_x) \leq 1$. We can repeat the computation using the fact that $(x^{**}-\chi_{\{x\}})(\nu)>2-\varepsilon$ for some $\nu\in B_{C(K)^*}$, and we obtain a contradiction.
	We pass to the converse implication. By Riesz's theorem, every element of $C(K)^\ast$ is given by integration with respect to a measure. Every such measure $\nu$ has a canonical decomposition $\nu = \nu_a + \nu_n$ into its atomic and its atomless part. The formula $x^{**}(\nu) = \nu_a(K) - \nu_n(K)$ defines an $L$-orthogonal element in $C(K)^{**}$. It is clear that $\|x^{**}\|=1$. Take $f\in C(K)$ and fix a point $p$ where $|f|$ attains its maximum. If $f(p) = \|f\|$ then $\|x^{**}+f\| \geq (x^{**}+f)(\delta_p) = 1 +\|f\|$. If $f(p) = -\|f\|$, take any $\varepsilon>0$, a neighborhood $W$ of $p$ where $f(t)<\varepsilon-\|f\|$ holds for every $t\in W$. Since $K$ has no isolated points, there exists an atomless probability measure $\nu$ supported inside $W$, cf. \cite[Theorem 19.7.6]{semadeni}. We get $\|x^{**} + f\| \geq (x^{**}+f)(-\nu) = 1 + \|f\| -\varepsilon$.
\end{proof}

\begin{proposition}\label{CKorthogonalseq}
	The space $C(K)$ has an $L$-orthogonal sequence if and only if there exist two sequences $\{C_n\},\{D_n\}$ of closed sets with $C_n\cap D_n=\emptyset$ and so that, for every non-empty open subset $O$ in $K$, there exists $m\in\mathbb N$ so that $C_n\cap O\neq \emptyset$ and $D_n\cap O\neq \emptyset$ holds for every $n\geq m$.
\end{proposition}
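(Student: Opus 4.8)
The plan is to read $\{C_n\}$ and $\{D_n\}$ as the regions where the functions of an $L$-orthogonal sequence come close to $+1$ and to $-1$, and to exploit that the norm on $C(K)$ is the supremum over $K$, so that the size of $g+f_n$ is governed by the points where $g$ attains $\pm\|g\|$. Throughout $K$ is compact Hausdorff, hence normal, and I will pass between disjoint closed sets and $[-1,1]$-valued continuous functions by Urysohn's lemma.

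For sufficiency I would, given $\{C_n\},\{D_n\}$, use Urysohn's lemma to pick $f_n\in B_{C(K)}$ with $f_n\equiv 1$ on $C_n$ and $f_n\equiv -1$ on $D_n$. Since $\|g+f_n\|\le 1+\|g\|$ always holds, it remains to check $\liminf_n\|g+f_n\|\ge 1+\|g\|$ for each $g$. I would fix $\varepsilon>0$ and a point $p$ with $|g(p)|=\|g\|$. If $g(p)=\|g\|$, continuity gives an open $O\ni p$ on which $g>\|g\|-\varepsilon$; the hypothesis gives $C_n\cap O\ne\emptyset$ for all large $n$, and any $t_n\in C_n\cap O$ satisfies $(g+f_n)(t_n)=g(t_n)+1>1+\|g\|-\varepsilon$. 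If $g(p)=-\|g\|$ the symmetric argument with $D_n$ gives $(g+f_n)(t_n)<-(1+\|g\|)+\varepsilon$. Either way $\|g+f_n\|>1+\|g\|-\varepsilon$ for large $n$. The degenerate case $g=0$ is covered by the same reasoning once one notes that applying the hypothesis to $O=K$ makes $C_n$ eventually nonempty, so $\|f_n\|\to 1$.

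For necessity I would start from an $L$-orthogonal sequence $\{f_n\}\subseteq B_{C(K)}$ and set
\[
C_n=\{t\in K:\ f_n(t)\ge \tfrac12\},\qquad D_n=\{t\in K:\ f_n(t)\le -\tfrac12\},
\]
which are automatically disjoint and closed. Fixing a nonempty open $O$ and $q\in O$, I would take a Urysohn function $h\in B_{C(K)}$ with $h(q)=1$, $0\le h\le 1$ and $h\equiv 0$ off $O$, so that $\|h\|=1$, and test $L$-orthogonality against $g=h$: then $\|h+f_n\|\to 2$, so a maximizer $t_n$ of $|h+f_n|$ eventually satisfies $|h(t_n)+f_n(t_n)|>\tfrac32$. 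As $h+f_n\ge -1$, this forces $h(t_n)+f_n(t_n)>\tfrac32$, and hence both $f_n(t_n)>\tfrac12$ and $h(t_n)>\tfrac12$; the second inequality places $t_n$ in $O$ (since $h$ vanishes off $O$), so $C_n\cap O\ne\emptyset$. Testing against $g=-h$ and using $-h+f_n\le 1$ symmetrically yields a point of $D_n\cap O$ for all large $n$.

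The step I expect to be the crux is this last one: a priori the sets ``$f_n$ close to $+1$'' depend both on a tolerance and on $O$, whereas the statement requires a single pair of sequences good for every open set. The resolution is the observation that $L$-orthogonality drives the values of $f_n$ on the peak of any bump all the way to $\pm 1$ within any prescribed tolerance, so a single fixed threshold --- here $\tfrac12$, chosen strictly inside $(0,1)$ precisely so as to also guarantee $C_n\cap D_n=\emptyset$ --- is eventually crossed on every nonempty open set, uniformly in the choice of $O$.
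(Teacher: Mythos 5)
Your proof is correct and follows essentially the same route as the paper: the same sets $C_n=\{f_n\ge 1/2\}$, $D_n=\{f_n\le -1/2\}$ tested against a bump function supported in $O$ for necessity, and Urysohn/Tietze functions pinned to $\pm 1$ on $C_n$, $D_n$ evaluated near a point where $|g|$ peaks for sufficiency. The only difference is that you spell out in full the threshold argument that the paper compresses into ``we just apply the definition of $L$-orthogonal sequence to $f$''.
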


\begin{proof}
	If $\{f_n\}$ is an $L$-orthogonal sequence, consider the sets $C_n=\{x\in K : f_n(x)\geq 1/2\}$ and $D_n=\{x\in K : f_n(x)\leq -1/2\}$. Given a nonempty open set $O$ we can find a continuous $f:K\To [0,1]$ that vanishes on $K\setminus O$ and takes value 1 on some point of $O$. We just apply the definition of $L$-orthogonal sequence to $f$ to see that $O\cap C_n$ and $O\cap D_n$ are eventually nonempty. For the converse implication, for every $n\in\mathbb{N}$, by Tietze's extension theorem, find a continuous function $f_n:K\To [-1,1]$ that is constant equal to $1$ on $C_n$ and constant equal to $-1$ on $D_n$. We check that $\{f_n\}$ is an orthogonal sequence. To this end, pick $f\in C(K)$ and $\varepsilon>0$. Assume that there exists $t\in K$ so that $f(t)>\|f\|-\varepsilon$ (the case $f(t)<-\|f\|+\varepsilon$ runs similarly) and define the open set $O:=\{t\in K: f(t) >\|f\|-\varepsilon\}$. Find $m\in\mathbb N$ such that $O\cap C_n\neq \emptyset$ and $O\cap D_n\neq \emptyset$ for all $n\geq m$. Given one such $n$ we get, evaluating at a point of $C_n\cap O$, $$1+\|f\|-\varepsilon<\Vert f+f_n\Vert \leq \|f\| + \|f_n\| = \|f\|+1.$$
\end{proof}

Examples of compact spaces $K$ that satisfy the hypotheses of Proposition \ref{CKorthogonalseq} are metric spaces without isolated points (the sets $C_n$ and $D_n$ can be taken as finite sets that meet every ball of a finite cover with balls of radius $1/n$) and any infinite product of nontrivial compact spaces  (taking $C_n$ and $D_n$ the elements whose $i_n$-th coordinate belongs to two disjoint closed sets). An easy example of $C(K)$ where $L$-orthogonal elements exist but $L$-orthogonal sequences do not is the one-point Alexandroff compactification of the disjoint union of uncountably many compact spaces without isolated points. For some $n$, $C_n$ and $D_n$ should both intersect infinitely many sets in that union, and that would oblige the infinity point to belong to $C_n\cap D_n$. Another such example is the \v{C}ech-Stone remainder of the natural numbers, $K=\beta\mathbb{N}\setminus\mathbb{N}$. If we had such $C_n$ and $D_n$ there, and if we take $S_n$ a clopen set in $\beta \N$ separating $C_n$ and $D_n$, then $\{S_n\cap \N : n\in\mathbb{N}\}$ would give what is called a \emph{splitting family}, and all those families are uncountable, cf. \cite[p. 182]{halbeisen}. Thus $\ell_\infty/c_0 = C(\beta\mathbb{N}\setminus\mathbb{N})$ fails to have an $L$-orthogonal sequence, though it has an $L$-orthogonal element since $\beta\mathbb{N}\setminus\mathbb{N}$ has no isolated point. The space $\ell_\infty=C(\beta\mathbb{N})$ lacks both.

Another example that we may look is the Lipschitz-free space $\mathcal{F}(M)$ over a metric space $M$. If we have bounded sequences $(u_n)_n,(v_n)_n$ in $ M$ with $u_n\neq v_n$ and $\lim_n d(u_n,v_n)=0$, then $\{\frac{\delta_{u_n}-\delta_{v_n}}{d(u_n,v_n)}\}$ is an $L$-orthogonal sequence in $\mathcal F(M)$ by \cite[Theorem 2.6]{jr}. Note that every $w^*$-cluster point of $\{\frac{\delta_{u_n}-\delta_{v_n}}{d(u_n,v_n)}\}$ produces an $L$-orthogonal element \cite[Lemma 2.11]{ap}.

Finally, let us mention that an analogue class to octahedral spaces called \emph{almost square spaces} has been studied in \cite{aht,abrahamsenlangemetslima}, where the role of the $\ell_1$-norm is played by the $\ell_\infty$-norm instead. If a Banach space is almost square, then its dual $Y^*$ is octahedral \cite[Proposition 2.5]{abrahamsenlangemetslima}. A natural variation in our setting would be the following: We define a Banach space $X$ to be \emph{sequentially almost square} if there exists a sequence $(y_n)\subseteq S_Y$ such that $\Vert y+y_n\Vert\rightarrow \|y\|$ holds for every $y\in Y$. Though not formally defined in the literature, this concept has been implicitly used for instance, in \cite[Proposition 2.3]{gr} and in \cite[Lemma 4.4]{aht}.

\begin{proposition}\label{prop:ejesecuorto}
If a Banach space $Y$ is sequentially almost square, then $Y^*$ has an $L$-orthogonal sequence.
\end{proposition}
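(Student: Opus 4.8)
The plan is to translate the sequential almost square property of $Y$ into an $L$-orthogonal sequence in $Y^*$ via a duality argument. Given the sequence $(y_n)\subseteq S_Y$ witnessing that $Y$ is sequentially almost square, the natural idea is to pick norming functionals for these elements. Concretely, for each $n$ I would use Hahn--Banach to select $y_n^*\in S_{Y^*}$ with $y_n^*(y_n)=1$, and then argue that $\{y_n^*\}$ (or some modification of it) is $L$-orthogonal in $Y^*$. The heart of the matter is to verify that $\Vert x^*+y_n^*\Vert\to 1+\Vert x^*\Vert$ for every $x^*\in Y^*$, and for this the key is to evaluate these functionals on suitable elements of $Y$ of norm close to $1$.

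\smallskip

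The main computation would run as follows. Fix $x^*\in S_{Y^*}$ (the general case reduces to this by homogeneity, treating $x^*=0$ trivially) and $\varepsilon>0$. Choose $y\in S_Y$ with $x^*(y)>1-\varepsilon$. By the defining property of the sequentially almost square sequence, $\Vert y+y_n\Vert\to \Vert y\Vert=1$, so for large $n$ we have $\Vert y+y_n\Vert< 1+\varepsilon$. The element $w_n:=\frac{y+y_n}{\Vert y+y_n\Vert}$ is then a norm-one vector on which I can evaluate $x^*+y_n^*$: for large $n$,
\[
(x^*+y_n^*)(w_n)=\frac{x^*(y)+x^*(y_n)+y_n^*(y)+y_n^*(y_n)}{\Vert y+y_n\Vert}\geq \frac{x^*(y)+y_n^*(y_n)-|x^*(y_n)|-|y_n^*(y)|}{1+\varepsilon}.
\]
Here $y_n^*(y_n)=1$ and $x^*(y)>1-\varepsilon$, so the numerator is bounded below by roughly $2-\varepsilon-|x^*(y_n)|-|y_n^*(y)|$. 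The obstacle is controlling the cross terms $x^*(y_n)$ and $y_n^*(y)$, which need not be small for a fixed sequence. I expect this to be the main difficulty, and the way around it is to exploit the freedom in choosing $y$ together with the fact that $\Vert y+y_n\Vert\to\Vert y\Vert$ forces $y_n^*(y)\to 0$ along a suitable argument, or alternatively to pass to a subsequence and use a diagonal/weak\textsuperscript{*} limiting argument so that the relevant cross terms are asymptotically negligible.

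\smallskip

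An alternative and perhaps cleaner route, which I would pursue if the direct estimate gets stuck on the cross terms, is to work on the level of the bidual or to use the characterization of $L$-orthogonality more flexibly: instead of insisting that the originally given $(y_n^*)$ works, I would allow myself to replace each $y_n^*$ by a functional that simultaneously nearly norms $y_n$ and behaves well against a growing finite-dimensional piece of $Y^*$. Since we only need $\Vert x^*+y_n^*\Vert\to 1+\Vert x^*\Vert$, and the inequality $\Vert x^*+y_n^*\Vert\leq 1+\Vert x^*\Vert$ is automatic from the triangle inequality, the entire content is the lower estimate, which the evaluation above delivers once the cross terms are handled. I would therefore organize the proof so that the upper bound is dispatched in one line and all the effort is concentrated on producing, for each $x^*$ and each $\varepsilon$, a tail of indices where the evaluation on $w_n$ exceeds $1+\Vert x^*\Vert-\varepsilon$.
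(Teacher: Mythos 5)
Your skeleton coincides with the paper's proof: take Hahn--Banach norming functionals $y_n^*\in S_{Y^*}$ with $y_n^*(y_n)=1$, reduce to $x^*\in S_{Y^*}$, choose $y\in S_Y$ with $x^*(y)>1-\varepsilon$, and evaluate $x^*+y_n^*$ on $(y+y_n)/\Vert y+y_n\Vert$. But the one step that carries all the content --- controlling the cross terms $x^*(y_n)$ and $y_n^*(y)$ --- is exactly the step you leave open, and the escape routes you sketch (passing to a subsequence, a diagonal or weak$^*$ limiting argument, replacing the $y_n^*$ by better-behaved functionals) are not how it is resolved and would not straightforwardly give the estimate for the full sequence and for every $x^*$ at once. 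Your remark that the cross terms ``need not be small for a fixed sequence'' is in fact wrong: they are automatically small, and seeing why is the whole proof.

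The missing observation is that the defining condition $\Vert y+y_n\Vert\to\Vert y\Vert$ holds for \emph{every} $y\in Y$, in particular for $-y$, so for $n\geq m$ one has $\Vert y\pm y_n\Vert<1+\varepsilon$ with both signs simultaneously. Evaluating $x^*$ on $y\pm y_n$ gives $1-\varepsilon\pm x^*(y_n)<x^*(y)\pm x^*(y_n)\leq\Vert y\pm y_n\Vert<1+\varepsilon$, and choosing the sign appropriately yields $\vert x^*(y_n)\vert<2\varepsilon$; evaluating $y_n^*$ on $y_n\pm y$ and using $y_n^*(y_n)=1$ yields $\vert y_n^*(y)\vert<\varepsilon$ in the same way. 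With these bounds your displayed evaluation gives
$$\Vert x^*+y_n^*\Vert\geq\frac{(x^*+y_n^*)(y+y_n)}{\Vert y+y_n\Vert}>\frac{2-4\varepsilon}{1+\varepsilon}$$
for all $n\geq m$, which is the required lower estimate. No subsequence extraction or weak$^*$ compactness is needed; the two-sided use of the almost-square condition is the entire point.
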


\begin{proof}
Let $(y_n)$ be a sequence in $S_Y$ witnessing that $Y$ is sequentially almost square. For every $n\in\mathbb N$ take, by Hahn-Banach Theorem, an element $y^*_n\in S_{Y^*}$ so that $y^*_n(y_n)=1$. Let us prove that $(y^*_n)$ is an $L$-orthogonal sequence. To this end, pick $y^*\in S_{Y^*}$ and $\varepsilon>0$. Find $y\in S_Y$ so that $y^*(y)>1-\varepsilon$. By assumption, there exists $m\in\mathbb N$ so that $\Vert y\pm y_n\Vert<1+\varepsilon$ holds for every $n\geq m$. Now, given $n\geq m$, note that 
$$1+\varepsilon > y^*(y\pm y_n)=y^*(y)\pm y^*(y_n)>1-\varepsilon\pm y^*(y_n),$$
so making a suitable choice of sign we get $1-\varepsilon+\vert y^*(y_n)\vert<1+\varepsilon$, from where $\vert y^*(y_n)\vert<2\varepsilon$. In a similar way, since $y^*_n(y_n)=1$, we obtain that $\vert y^*_n(y)\vert<\varepsilon$. Hence
$$\Vert y^*+y^*_n\Vert\geq \frac{(y^*+y^*_n)(y+y_n)}{\Vert y+y_n\Vert}>\frac{2-3\varepsilon}{\Vert y+y_n\Vert}>\frac{2-3\varepsilon}{1+\varepsilon},$$
since $n>m$ was arbitrary we conclude the desired result by Lemma \ref{lemma:abal}.
\end{proof}

All separable almost square spaces are sequentially almost square (see the proof of \cite[Proposition 2.3]{gr}). For non-separable examples, it follows from the results of \cite[Section 3]{gr} that if $M$ is a locally compact, totally disconnected and not uniformly discrete metric space $M$, then the space of little Lipschitz functions $lip(M)$ is sequentially almost square. Also, if $X$ is sequentially almost square and $Y$ is a non-zero Banach space, then the space of compact operators $K(Y,X)$ is sequentally almost square, arguing like in \cite[Proposition 2.6]{gr}.

Let us finish with a well known result which is is very useful when dealing with $L$-orthogonal elements and which will be used without any explicit mention. A proof can be found, for instance, in \cite[Lemma 11.4]{abal}.

\begin{lemma}\label{lemma:abal}
Let $X$ be a Banach space and let $u,v\in X$ be two vectors so that
$$\Vert u+v\Vert=\Vert u\Vert+\Vert v\Vert.$$
Then the equality $\Vert \alpha u+\beta v\Vert=\alpha\Vert u\Vert+\beta \Vert v\Vert$ holds for every $\alpha,\beta\geq 0$.
\end{lemma}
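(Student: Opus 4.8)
The plan is to exploit a single Hahn--Banach functional that witnesses the equality in the triangle inequality, and to observe that such a functional is automatically forced to norm \emph{each} of the two vectors separately. First I would dispose of the degenerate cases: if $u=0$ or $v=0$ the asserted identity is immediate, so I may assume $u,v\neq 0$.

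By the Hahn--Banach theorem, choose $f\in S_{X^*}$ with $f(u+v)=\Vert u+v\Vert$. Invoking the hypothesis $\Vert u+v\Vert=\Vert u\Vert+\Vert v\Vert$, I would write the chain
$$\Vert u\Vert+\Vert v\Vert=f(u+v)=f(u)+f(v)\leq \Vert u\Vert+\Vert v\Vert,$$
where the final inequality uses $f(u)\leq\Vert u\Vert$ and $f(v)\leq\Vert v\Vert$ separately. The decisive step is to conclude that both of these bounds must be tight, that is $f(u)=\Vert u\Vert$ and $f(v)=\Vert v\Vert$: were either inequality strict, the sum $f(u)+f(v)$ would be strictly smaller than $\Vert u\Vert+\Vert v\Vert$, contradicting the displayed equality.

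With these two norming identities in hand, the conclusion follows for arbitrary $\alpha,\beta\geq 0$ by sandwiching $\Vert\alpha u+\beta v\Vert$ between the lower bound supplied by $f$ and the ordinary triangle inequality:
$$\alpha\Vert u\Vert+\beta\Vert v\Vert=\alpha f(u)+\beta f(v)=f(\alpha u+\beta v)\leq\Vert\alpha u+\beta v\Vert\leq \alpha\Vert u\Vert+\beta\Vert v\Vert,$$
so that equality holds throughout, which is precisely the claim. There is no genuine obstacle in this argument; the only point that deserves a moment of care is the passage from the single equality for $u+v$ to the pair of separate norming conditions $f(u)=\Vert u\Vert$ and $f(v)=\Vert v\Vert$, and this is exactly the place where the equality-in-the-triangle-inequality hypothesis is consumed.
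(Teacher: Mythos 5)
Your argument is correct and complete: the Hahn--Banach functional norming $u+v$ is forced to norm $u$ and $v$ individually, and the sandwich $\alpha\Vert u\Vert+\beta\Vert v\Vert=f(\alpha u+\beta v)\leq\Vert\alpha u+\beta v\Vert\leq\alpha\Vert u\Vert+\beta\Vert v\Vert$ finishes the proof. The paper does not spell out a proof at all (it only cites \cite[Lemma 11.4]{abal}), and your reasoning is precisely the standard argument behind that reference, so there is nothing to add.
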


\section{Maurey types techniques}\label{sect:maurey}

The goal of this section is to prove Lemmata \ref{lemma:previmaure} and \ref{lemma:sepamaurey}, which will play a fundamental role in Sections \ref{sect:cardinalp} and \ref{sect:selective}. Although these results are in essence a simple reformulation of \cite[Remark 1.3 and Lemma 1.4]{maurey}, we include proofs which we think might be more transparent for the reader.
First, we need the following auxiliary lemma.

\begin{lemma}\label{lemma:auxiliar} Let $X$ be a Banach space with an $L$-orthogonal sequence $\{x_n\}$. For every finite-dimensional subspace $F$ of $X$ and every $\varepsilon>0$ there exists $m\in\mathbb N$ so that
	$$\Vert y+\lambda x_n\Vert>(1-\varepsilon)(\Vert y\Vert+\vert \lambda\vert)$$
	holds for every $y\in F$, every $\lambda\in\mathbb R$ and every $n\geq m$.
\end{lemma}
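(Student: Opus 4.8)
The plan is to upgrade the pointwise convergence $\Vert x+x_n\Vert\to 1+\Vert x\Vert$ to a convergence that is uniform over the relevant directions, exploiting that $F$ is finite-dimensional. First I would observe that the desired inequality is positively homogeneous of degree one in the pair $(y,\lambda)$: replacing $(y,\lambda)$ by $(ty,t\lambda)$ with $t>0$ multiplies both sides by $t$. Hence it suffices to prove it on the set
$$K:=\{(y,\lambda)\in F\times\mathbb R:\ \Vert y\Vert+\vert\lambda\vert=1\},$$
which is compact because $F$ is finite-dimensional. For $(y,\lambda)\in K$ the target inequality reads $\Vert y+\lambda x_n\Vert>1-\varepsilon$, so the statement reduces to showing that $f_n(y,\lambda):=\Vert y+\lambda x_n\Vert$ eventually exceeds $1-\varepsilon$ uniformly on $K$. (The pair $(0,0)$, where both sides vanish, is understood to be excluded.)

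Next I would establish pointwise convergence $f_n(y,\lambda)\to 1$ on $K$. For $\lambda>0$ this is immediate from the definition applied to $x=y/\lambda$, since $f_n(y,\lambda)=\lambda\Vert (y/\lambda)+x_n\Vert\to\lambda(1+\Vert y/\lambda\Vert)=\Vert y\Vert+\lambda=1$. For $\lambda<0$ the same computation works once one notes that $L$-orthogonality of $\{x_n\}$ passes to $\{-x_n\}$: applying the definition to $-z$ gives $\Vert z-x_n\Vert=\Vert -z+x_n\Vert\to 1+\Vert z\Vert$. The case $\lambda=0$ is trivial, as then $f_n\equiv\Vert y\Vert=1$.

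The key structural remark, and the step that makes the compactness argument go through, is that the family $\{f_n\}$ is equicontinuous; indeed it is uniformly $1$-Lipschitz on $F\times\mathbb R$ for the norm $\Vert(y,\lambda)\Vert_1:=\Vert y\Vert+\vert\lambda\vert$, because
$$\vert f_n(y,\lambda)-f_n(y',\lambda')\vert\leq \Vert (y-y')+(\lambda-\lambda')x_n\Vert\leq \Vert y-y'\Vert+\vert\lambda-\lambda'\vert,$$
using $\Vert x_n\Vert\leq 1$ uniformly in $n$. With equicontinuity in hand I would finish by a standard net argument: cover $K$ by finitely many $\Vert\cdot\Vert_1$-balls of radius $\varepsilon/2$ centred at points $(y_i,\lambda_i)\in K$, use pointwise convergence to find $m$ with $f_n(y_i,\lambda_i)>1-\varepsilon/2$ for all $n\geq m$ and all $i$, and then for an arbitrary $(y,\lambda)\in K$ pick the nearby centre and estimate $f_n(y,\lambda)\geq f_n(y_i,\lambda_i)-\varepsilon/2>1-\varepsilon$. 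Homogeneity then restores the inequality for all $(y,\lambda)$.

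I expect the only genuine subtlety to be bookkeeping rather than a conceptual obstacle: one must guarantee that the convergence is simultaneously uniform in $y$ and in $\lambda$, which is precisely what the reduction to the compact set $K$—carrying the scalar $\lambda$ as one additional coordinate—together with the uniform Lipschitz bound achieves. In particular no separate treatment of large $\vert\lambda\vert$ is required, since homogeneity confines the whole problem to $K$.
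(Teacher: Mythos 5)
Your proof is correct, and it reaches the conclusion by a somewhat different decomposition than the paper's. The paper also runs a finite $\varepsilon$-net/compactness argument, but only over the unit sphere $S_F$ and only for the coefficient $\lambda=1$: it first shows $\Vert y+x_n\Vert\geq 2-\varepsilon$ uniformly for $y\in S_F$ and $n$ large, and then extends to arbitrary scalars by an algebraic reverse-triangle-inequality computation (writing $t_1x_n+t_2y=t_1(x_n+y)+(t_2-t_1)y$ and splitting into the cases $t_1\geq t_2$ and $t_2>t_1$), which is in effect an approximate version of Lemma \ref{lemma:abal}. You instead absorb the scalar into the compact set $K\subset F\times\mathbb{R}$ from the outset and let the uniform $1$-Lipschitz bound (valid because $\Vert x_n\Vert\leq 1$) deliver uniformity in $y$ and $\lambda$ simultaneously; the price is that you must check pointwise convergence also for $\lambda<0$, which you handle correctly by noting that applying $L$-orthogonality to $-y/\vert\lambda\vert$ gives $\Vert y+\lambda x_n\Vert\to\Vert y\Vert+\vert\lambda\vert$. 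Your route avoids the case analysis entirely and, as a minor bonus, actually yields the strict inequality claimed in the statement, whereas the paper's final chain of estimates only produces $\geq 1-\varepsilon$. Both arguments rest on the same two ingredients --- compactness in finite dimensions plus the uniform Lipschitz estimate coming from $x_n\in B_X$ --- so the difference is organizational rather than conceptual, but your packaging is arguably the cleaner of the two.
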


\begin{proof} 
	Fix $F$ and $\varepsilon$. Since $S_F$ is compact, we can find $\{y_1,\ldots,y_k\}\subseteq S_F$ such that $S_F\subseteq \bigcup\limits_{i=1}^k B\left( y_i,\frac{\varepsilon}{2}\right)$. Since $\{x_n\}$ is an $L$-orthogonal sequence, find $m\in\mathbb N$ so that $n\geq m$ implies
	$$\Vert y_i+x_n\Vert\geq 2-\frac{\varepsilon}{2}\ \mbox{ for every } i\in\{1,\ldots, k\}.$$
	Fix $n\geq m$. Since $S_F\subseteq \bigcup\limits_{i=1}^n B\left( y_i,\frac{\varepsilon}{2}\right)$, the triangle inequality implies that
	$$\Vert y+x_n\Vert\geq 2-\varepsilon$$
	holds for all $y\in S_F$. Let us conclude from here the desired inequality.
	Let $y\in S_F$ and $t_1,t_2 \in \R$ be arbitrary. It is enough to show that 
	$$\|t_1x_n+t_2y\| > (1-\varepsilon)(|t_1|+|t_2|)$$
	holds for every $n\geq m$.
	Dividing by a suitable constant, we can suppose that $|t_1|+|t_2|=1$. Moreover, taking $-y$ instead of $y$ if necessary, we can suppose that $t_1$ and $t_2$ are both positive.
	We assume that $t_1\geq t_2$ (the other case is analogous). Then  
	\[\begin{split}\Vert t_1 x_n+t_2 y\Vert& =\Vert t_1(x_n+y)+(t_2-t_1)y\Vert\\
	& \geq t_1\Vert x_n+y\Vert-\vert t_2-t_1\vert\Vert y\Vert\\
	& \geq t_1(2-\varepsilon)+t_2-t_1=t_1+t_2-t_1\varepsilon\\
	& \geq 1-\varepsilon\end{split}\]
	as desired.
\end{proof}

\begin{lemma}\label{lemma:previmaure}
Let $X$ be a Banach space with an $L$-orthogonal sequence $\{x_n\}$. Take $Z$ a separable subspace of $X$ and $\varepsilon_n$ a sequence of positive real numbers. Write $Z=\overline{\bigcup\limits_{n\in\mathbb N} F_n}$, where $\{F_n\}$ is an increasing sequence of finite dimensional subspaces of $X$. Then there exists a subsequence $\{y_n\}$ of $\{x_n\}$ so that 
$$(1-\varepsilon_n)(1+\Vert x\Vert)\leq \left\Vert x+\sum_{j=n+1}^\infty \alpha_j y_j\right\Vert$$
holds for every $n\in\mathbb N$, $x\in F_n$ and every $\{\alpha_j\}\subseteq \mathbb R^+$ with $\sum_{j=n+1}^\infty \alpha_j=1$.
\end{lemma}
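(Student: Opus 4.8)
The plan is to build the subsequence $\{y_n\}$ by a greedy, one-term-at-a-time selection, using Lemma \ref{lemma:auxiliar} to force each new term to be almost $L$-orthogonal to everything chosen before it together with the current finite-dimensional piece of $Z$, and then to obtain the infinite estimate by peeling terms off the tail and passing to a limit. First I fix an error budget: writing $b_n=\log\frac{1}{1-\varepsilon_n}>0$ and $T_n:=2^{-n}\min\{b_1,\dots,b_n\}$ for $n\ge1$, the sequence $(T_n)$ strictly decreases to $0$ and satisfies $T_n\le b_n$, so choosing positive reals $\delta_j$ (for $j\ge2$) by $\log\frac{1}{1-\delta_j}=T_{j-1}-T_j$ yields $\prod_{j=n+1}^\infty(1-\delta_j)=e^{-T_n}\ge e^{-b_n}=1-\varepsilon_n$ for every $n$. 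The point of this bookkeeping is that the tail products stay above $1-\varepsilon_n$ simultaneously for all $n$, even when $\{\varepsilon_n\}$ is not monotone; the term $y_1$ will play no role, since all the sums below start at index $\ge2$.

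Next comes the inductive selection. Suppose $y_1,\dots,y_{k-1}$ have already been chosen as terms of $\{x_n\}$ with strictly increasing indices. Consider the finite-dimensional subspace $G_{k-1}:=\spann\big(F_{k-1}\cup\{y_1,\dots,y_{k-1}\}\big)$ and apply Lemma \ref{lemma:auxiliar} to $G_{k-1}$ with error $\delta_k$; this produces $m\in\N$ such that $\Vert y+\lambda x_i\Vert>(1-\delta_k)(\Vert y\Vert+|\lambda|)$ for all $y\in G_{k-1}$, all $\lambda\in\R$ and all $i\ge m$. Choosing $y_k:=x_i$ for some index $i$ larger than both $m$ and the previous index keeps $\{y_k\}$ a subsequence and guarantees that $y_k$ is $\delta_k$-almost $L$-orthogonal to the whole of $G_{k-1}$, uniformly over its elements and over scalars.

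Finally I estimate. Fix $n$, $x\in F_n$ and $\{\alpha_j\}_{j>n}\subseteq\R^+$ with $\sum_{j>n}\alpha_j=1$, and set $S_N=x+\sum_{j=n+1}^N\alpha_j y_j$ for $N\ge n$ (so $S_n=x$). For each $N>n$ one has $S_{N-1}\in G_{N-1}$, because $x\in F_n\subseteq F_{N-1}$ and $y_{n+1},\dots,y_{N-1}$ all carry index $<N$; hence the almost-orthogonality of $y_N$ gives $\Vert S_N\Vert>(1-\delta_N)(\Vert S_{N-1}\Vert+\alpha_N)$. Iterating this recursion down to $S_n=x$ yields
$$\Vert S_N\Vert>\Big(\prod_{j=n+1}^N(1-\delta_j)\Big)\Vert x\Vert+\sum_{k=n+1}^N\Big(\prod_{j=k}^N(1-\delta_j)\Big)\alpha_k,$$
and since each of these finite products is at least $\prod_{j=n+1}^\infty(1-\delta_j)\ge 1-\varepsilon_n$, it follows that $\Vert S_N\Vert>(1-\varepsilon_n)\big(\Vert x\Vert+\sum_{k=n+1}^N\alpha_k\big)$. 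The tail series converges absolutely because $\sum\alpha_j\Vert y_j\Vert\le\sum\alpha_j=1$, so letting $N\to\infty$ turns the strict inequality into the desired $(1-\varepsilon_n)(1+\Vert x\Vert)\le\big\Vert x+\sum_{j=n+1}^\infty\alpha_j y_j\big\Vert$.

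I expect the genuinely delicate points to be organizational rather than conceptual: arranging the $\delta_j$ so that every tail product is controlled by the possibly non-monotone $\varepsilon_n$, and keeping the indices straight so that at the moment $y_N$ is selected it is already orthogonal to the entire vector $S_{N-1}$, i.e.\ to $F_{N-1}$ and to all earlier terms at once. Lemma \ref{lemma:auxiliar} is precisely the tool that makes each selection uniform over $x\in F_n$ and over all scalars, so that a single subsequence works simultaneously for every $n$, every $x$ and every admissible choice of weights.
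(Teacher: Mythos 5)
Your proof is correct and follows essentially the same route as the paper's: choose $\delta_j$ with all tail products $\prod_{j>n}(1-\delta_j)$ dominating $1-\varepsilon_n$, inductively pick $y_k$ almost $L$-orthogonal (via Lemma \ref{lemma:auxiliar}) to $\spann(F_{k-1}\cup\{y_1,\dots,y_{k-1}\})$, peel off the last term of the partial sums recursively, and pass to the norm limit. Your only additions are an explicit logarithmic construction of the $\delta_j$ (the paper merely asserts such a sequence exists) and a slightly more careful tracking of which span each $S_{N-1}$ lies in; both are harmless refinements of the same argument.
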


\begin{proof}
	Pick $(\delta_n)$ a sequence of positive scalars so that $(1-\varepsilon_n)<\prod_{j=n+1}^\infty (1-\delta_j)$. By induction, we can construct inductively a subsequence $(y_n)$ of $(x_n)$ so that
	$$\Vert y+\lambda y_n\Vert\geq (1-\delta_n)(\Vert y\Vert+\vert\lambda\vert)$$
	holds for every $y\in span(F_n\cup\{y_1,\ldots, y_{n-1}\})$ and every $\lambda\in\mathbb R$. Let us prove that the sequence $(y_n)$ satisfies our requirements. To this end, pick $n\in\mathbb N$, $x\in F_n$ and ${\alpha_j}$ a sequence of positive scalars with $\sum_{j=n+1}^\infty \alpha_j=1$. Then, for every $k>n$ it follows that
	\[\begin{split}
	\left\Vert x+\sum_{j=n+1}^k \alpha_j y_j\right\Vert& \geq (1-\delta_k)\left(\left\Vert x+\sum_{j=n+1}^{k-1}\alpha_j y_j\right\Vert+\alpha_k\right)\\
	& \geq (1-\delta_k)\left((1-\delta_{k-1})\left(\left\Vert x+\sum_{j=n+1}^{k-2}\alpha_j y_j\right\Vert+\alpha_{k-1}\right)+\alpha_k\right)\\
	& \geq (1-\delta_k)(1-\delta_{k-1})\left(\left\Vert x+\sum_{j=n+1}^{k-2}\alpha_j y_j\right\Vert+\alpha_{k-1}+\alpha_k\right).
	\end{split} 
	\]
	Continuing in this fashion we get that 
	\[\begin{split}
	\left\Vert x+\sum_{j=n+1}^k \alpha_j y_j\right\Vert& \geq \prod\limits_{i=n+1}^k(1-\delta_i)\left(\Vert x\Vert+\sum_{j=n+1}^k\alpha_j\right)\\
	& \geq \prod\limits_{i=n+1}^\infty(1-\delta_i)\left(\Vert x\Vert+\sum_{j=n+1}^k\alpha_j\right)\\
	& \geq (1-\varepsilon_n)\left(\Vert x\Vert+\sum_{j=n+1}^k\alpha_j\right).
	\end{split} 
	\]
	Note that $(x+\sum_{j=n+1}^k \alpha_j y_j)_k$ converges in norm to $x+\sum_{j=n+1}^\infty \alpha_j y_j$. Since the previous inequality holds for every $n\in\mathbb N$ we get that
	$$\left\Vert x+\sum_{j=n+1}^k \alpha_j y_j\right\Vert\geq (1-\varepsilon_n)\left(\Vert x\Vert+\sum_{j=n+1}^\infty\alpha_j\right),$$
	as desired.
\end{proof}

The next lemma was implicitly proved by Maurey \cite[Lemma 1.4]{maurey}, though we include a short proof for the sake of completeness.

\begin{lemma}\label{lemma:sepamaurey}
Let $X$ be a Banach space with an $L$-orthogonal sequence $\{x_n\}\subseteq S_X$. Let $Z$ be a separable subspace of $X$ and let $\{F_n:n\in\mathbb N\}$ be an increasing sequence of finite-dimensional subspaces of $X$ so that $\bigcup\limits_{n\in\mathbb N} F_n$ is dense in $Z$. Pick $\varepsilon_n$ a sequence of positive scalars converging to zero and let $\{y_n\}$ be a subsequence of $\{x_n\}$ satisfying that 
$$(1-\varepsilon_n)(1+\Vert x\Vert)\leq \left\Vert x+\sum_{j=n+1}^\infty \alpha_j y_j\right\Vert$$
holds for every $n\in\mathbb N$, $x\in F_n$ and every $\{\alpha_j\}\subseteq \mathbb R^+$ with $\sum_{j=n+1}^\infty \alpha_j=1$.

Then, any $u\in \bigcap\limits_{n\in\mathbb N} \overline{conv}^{w^*}\{y_j:j>n\} \subseteq B_{X^{**}}$ (in particular, any weak*-cluster point of the sequence) satisfies that
$$\Vert x+u\Vert=1+\Vert x\Vert$$
for every $x\in Z$.
\end{lemma}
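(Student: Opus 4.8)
The plan is to derive the equality $\Vert x+u\Vert = 1+\Vert x\Vert$ from the two inequalities in the obvious way: the upper bound is automatic, and the lower bound is where the hypothesis does its work. Since $u\in B_{X^{**}}$ and $\{y_n\}\subseteq S_X$ (so $\Vert u\Vert\leq 1$), for every $x\in Z$ we get $\Vert x+u\Vert \leq \Vert x\Vert + \Vert u\Vert \leq 1+\Vert x\Vert$. Thus everything reduces to proving $\Vert x+u\Vert \geq 1+\Vert x\Vert$.

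First I would reduce to the case $x\in\bigcup_n F_n$. Fix $x\in Z$ and $\eta>0$; since $\bigcup_n F_n$ is dense in $Z$, pick $x'\in F_{n}$ (for some $n$) with $\Vert x-x'\Vert<\eta$. If I can prove the lower bound for elements of the $F_n$, then $\Vert x+u\Vert \geq \Vert x'+u\Vert - \eta \geq 1+\Vert x'\Vert - \eta \geq 1+\Vert x\Vert - 2\eta$, and letting $\eta\to 0$ finishes this reduction. So from now on assume $x\in F_n$ for a fixed $n$.

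The key step is to transfer the hypothesis, which is stated for finite convex combinations $\sum_{j=n+1}^\infty \alpha_j y_j$ of the $y_j$, to the weak*-limit $u$. The natural tool is that $u\in \overline{conv}^{w^*}\{y_j : j>n\}$, so $u$ is a weak*-limit of a net of finite convex combinations $v_\gamma = \sum_{j>n}\alpha_j^\gamma y_j$ (each a finite sum, coefficients nonnegative summing to $1$). For each such $v_\gamma$, the hypothesis of the lemma gives $\Vert x+v_\gamma\Vert \geq (1-\varepsilon_n)(1+\Vert x\Vert)$. The point I would exploit is that, viewing $x+v_\gamma$ inside $X^{**}$, the map $z^{**}\mapsto \Vert x+z^{**}\Vert$ is weak*-lower-semicontinuous on $X^{**}$ (the norm of the bidual is the supremum of the weak*-continuous functionals $z^{**}\mapsto z^{**}(\phi)$ over $\phi\in B_{X^*}$, and a supremum of continuous functions is lower semicontinuous). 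Hence, since $v_\gamma \to u$ weak*, we obtain
\[
\Vert x+u\Vert \;\leq\; \liminf_\gamma \Vert x+v_\gamma\Vert
\]
is the wrong direction; what lower semicontinuity actually gives is $\Vert x+u\Vert \leq \liminf_\gamma \Vert x+v_\gamma\Vert$, so instead I would use it as $\Vert x+u\Vert \geq$ is not immediate. The correct route is: lower semicontinuity yields $\liminf_\gamma \Vert x+v_\gamma\Vert \geq \Vert x+u\Vert$, which bounds $\Vert x+u\Vert$ from above, not below. So I must argue differently. I would instead fix $\phi\in B_{X^*}$ achieving $\Vert x+u\Vert$ approximately, i.e.\ $(x+u)(\phi) > \Vert x+u\Vert - \eta$, and then note that $v_\gamma(\phi)\to u(\phi)$, so for $\gamma$ large $(x+v_\gamma)(\phi) > (x+u)(\phi)-\eta$. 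This does not directly help either. The clean argument is to run the estimate the other way: the hypothesis says every finite convex combination $v$ of $\{y_j:j>n\}$ satisfies $\Vert x+v\Vert \geq (1-\varepsilon_n)(1+\Vert x\Vert)$, i.e.\ there is $\phi_v\in B_{X^*}$ with $(x+v)(\phi_v) \geq (1-\varepsilon_n)(1+\Vert x\Vert)$. Since $u$ lies in the weak*-closed convex hull of $\{y_j:j>n\}$ and the function $z^{**}\mapsto \Vert x+z^{**}\Vert$ is weak*-lower-semicontinuous \emph{and convex}, its infimum over the weak*-closed convex hull is attained in the limit of the generating set, giving $\Vert x+u\Vert \geq \inf\{\Vert x+v\Vert : v\in conv\{y_j:j>n\}\} \geq (1-\varepsilon_n)(1+\Vert x\Vert)$. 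The main obstacle is precisely pinning down this last inequality rigorously: one must verify that a convex weak*-lower-semicontinuous function on the weak*-compact convex set $\overline{conv}^{w^*}\{y_j:j>n\}$ cannot take on $u$ a value strictly below its infimum over the extreme generators, which follows because such a function on a compact convex set attains its minimum at the whole set bounded below by the bound on the generators (a convex l.s.c.\ function is bounded below on the closed convex hull by its bound on the generating set). Granting this, letting $n\to\infty$ and using $\varepsilon_n\to 0$ gives $\Vert x+u\Vert \geq 1+\Vert x\Vert$, and combined with the upper bound completes the proof.
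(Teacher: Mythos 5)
Your reduction to $x\in\bigcup_n F_n$ and the trivial upper bound are fine, and you correctly diagnose that weak*-lower semicontinuity of the bidual norm transfers upper bounds, not lower bounds, to weak* limits. But the principle you then lean on to close the argument --- ``a convex l.s.c.\ function is bounded below on the closed convex hull by its bound on the generating set'' --- is false, and it fails precisely in the weak* setting you need. Convexity gives \emph{upper} bounds on convex combinations (the norm equals $1$ on $\{-1,1\}$ but vanishes at $0\in conv\{-1,1\}$), and even when, as here, the lower bound is already assumed on the whole convex hull, it need not survive weak* closure: in $\ell_\infty=c_0^{**}$ put $s_n=\sum_{j\geq n}e_j$; every element of $conv\{s_n:n\in\mathbb N\}$ has supremum norm exactly $1$, yet $s_n\to 0$ in the weak* topology, so $0$ lies in the weak*-closed convex hull while the convex, weak*-l.s.c.\ norm drops to $0$ there. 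Thus the inequality $\Vert x+u\Vert\geq\inf\{\Vert x+v\Vert: v\in conv\{y_j:j>n\}\}$, which you yourself single out as the main obstacle, is exactly what remains unproved.

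The missing idea is Hahn--Banach separation carried out \emph{in $X$}. The hypothesis says that the convex set $x+conv\{y_j:j>n\}$ is disjoint from the ball $(1-2\varepsilon_n)(1+\Vert x\Vert)B_X$, which has nonempty interior, so there is a \emph{single} functional $f_n\in S_{X^*}$ with $f_n(x+z)\geq(1-2\varepsilon_n)(1+\Vert x\Vert)$ for all $z\in conv\{y_j:j>n\}$; this replaces your family of functionals $\phi_v$ depending on $v$, which is useless because no one of them controls the whole set. Since $f_n$, viewed as a function on $X^{**}$, is weak*-continuous, this single inequality passes to every $u\in\overline{conv}^{w^*}\{y_j:j>n\}$ and yields $\Vert x+u\Vert\geq f_n(x+u)\geq(1-2\varepsilon_n)(1+\Vert x\Vert)$; letting $n\to\infty$ finishes. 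Note where the $\ell_\infty$ counterexample is blocked: there a functional separating $conv\{s_n\}$ from the ball must be something like a Banach limit, which is not weak*-continuous, whereas in the lemma the convex set lies in the predual $X$, so the separator can be taken in $X^*$ and hence weak*-continuous. This separation step is the content of the paper's proof and is the genuine gap in yours.
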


\begin{proof}
Given any $x\in Z$ we can assume, up to a density argument, that there exists $m\in\mathbb N$ so that $x\in F_n$ holds for every $n\geq m$. Given $n\geq m$, notice that the condition on the sequence $\{y_n\}$ implies that 
$$x+conv\{y_j:j>n\}\cap (1-2\varepsilon_n)(1+\Vert x\Vert)B_X=\emptyset.$$
By \cite[Corollary 2.13 (b)]{fab} there exists $f_n\in S_{X^*}$ so that $f_n(x+z)\geq (1-2\varepsilon_n)(1+\Vert x\Vert)$ holds for every $z\in conv(\{y_j:j>n\})$. Note that the condition on $f_n$ also implies that 
\begin{equation}\label{eq:s2ortmaurey}
x+\overline{conv\{y_j:j>n\}}^{w^*}\cap (1-2\varepsilon_n)(1+\Vert x\Vert)B_{X^{**}}=\emptyset
\end{equation}
Finally,
if $u\in \bigcap\limits_{n\in\mathbb N}\overline{conv\{y_j:j>n\}}^{w^*}$ then, for every $n>m$, \eqref{eq:s2ortmaurey} implies that $\Vert x+u\Vert\geq (1-2\varepsilon_n)(1+\Vert x\Vert)$.
\end{proof}

Putting everything together what we have is that, when restricting to a separable subspace $Z$, an $L$-orthogonal sequence to $Z$ has a subsequence all of whose block subsequences are $L$-orthogonal to $Z$.

\section{Banach spaces of density at most $\mathfrak{p}$}\label{sect:cardinalp}

We briefly recall what the pseudointersection number $\mathfrak{p}$ is. For two sets $N$, $M$, the notation $N \subseteq^* M$ means that $N \setminus M$ is finite. Given a family of sets $\mathcal{F}$, a pseudointersection for $\mathcal{F}$ is an infinite set $A$ such that $A\subseteq^* B$ for all $B\in\mathcal{F}$. The cardinal $\mathfrak{p}$ is the least cardinality of a family of subsets of $\mathbb{N}$ such that every intersection of elements of $\mathcal{F}$ is infinite, but $\mathcal{F}$ has no pseudointersection. 
A related cardinal $\mathfrak{t}$ has been historically considered, where instead of demanding that every finite intersection is infinite, we require the stronger condition that $\mathcal{F}$ must be a reversely well-ordered family of inifinite sets, in the form $\mathcal{F} = \{N_\alpha : \alpha <\kappa\}$ with $\kappa$ an ordinal and $N_\alpha \subseteq^* N_\beta$ for every  $\beta<\alpha$. It is a simple exercise that these cardinals must be uncountable, so we have
$$ \aleph_1 \leq \mathfrak{p} \leq \mathfrak{t} \leq \mathfrak{c}. $$

We refer the reader to \cite{barto,halbeisen} for technical details and background. The two inequalities at the extremes may become equalities in some models of set theory but strict inequalities in other models. It was shown however in \cite{MalShe}, after being a longstanding open problem, that $\mathfrak{p} = \mathfrak{t}$.

%

\begin{theorem}\label{theo:octahedral}
Let $X$ be a Banach space with $dens(X) \leq \mathfrak{p}$, and suppose that $\{x_n\}$ is an $L$-orthogonal sequence. Then there is an $L$-orthogonal $w^*$-cluster point of $\{x_n\}$.
\end{theorem}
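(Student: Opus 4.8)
The plan is to obtain the $L$-orthogonal cluster point by a transfinite recursion of length $\kappa:=dens(X)$ that improves the sequence one dense vector at a time, together with a single ultrafilter limit at the end. Fix a dense subset $\{d_\alpha:\alpha<\kappa\}$ of $X$. Since $x\mapsto \Vert x+u\Vert$ and $x\mapsto 1+\Vert x\Vert$ are continuous, it suffices to produce a $w^*$-cluster point $u$ of $\{x_n\}$ satisfying $\Vert d_\alpha+u\Vert=1+\Vert d_\alpha\Vert$ for every $\alpha<\kappa$; density then upgrades this to all of $X$.

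First I would build a $\subseteq^*$-decreasing family of infinite sets $\{N_\alpha:\alpha<\kappa\}\subseteq\mathcal P(\mathbb N)$ by recursion. At stage $\alpha$, the part $\{N_\beta:\beta<\alpha\}$ already constructed is $\subseteq^*$-decreasing, so it has the strong finite intersection property, and it has cardinality $|\alpha|<\mathfrak p$ because $\alpha<\kappa\leq\mathfrak p$. By the definition of $\mathfrak p$ it admits a pseudointersection $M_\alpha$, an infinite set with $M_\alpha\subseteq^* N_\beta$ for all $\beta<\alpha$. Now $(x_n)_{n\in M_\alpha}$ is again an $L$-orthogonal sequence, so applying Lemmata \ref{lemma:previmaure} and \ref{lemma:sepamaurey} to it with the (one-dimensional, hence separable) subspace $Z=\mathbb R\, d_\alpha$ and a null sequence of positive scalars produces a further subsequence, indexed by an infinite set $N_\alpha\subseteq M_\alpha$, all of whose $w^*$-cluster points are $L$-orthogonal to $Z$, in particular to $d_\alpha$ (here one also uses Lemma \ref{lemma:abal}). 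Since $N_\alpha\subseteq M_\alpha\subseteq^* N_\beta$ for every $\beta<\alpha$, the family remains $\subseteq^*$-decreasing and the recursion continues.

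Once the whole family is built it has the finite intersection property, so together with the cofinite sets it extends to a free ultrafilter $\mathcal U$ on $\mathbb N$ with $N_\alpha\in\mathcal U$ for all $\alpha$. Let $u:=w^*\text{-}\lim_{\mathcal U}x_n\in B_{X^{**}}$, which exists by $w^*$-compactness and is a $w^*$-cluster point of $\{x_n\}$. For each $\alpha$, freeness of $\mathcal U$ and $N_\alpha\in\mathcal U$ force $u\in\bigcap_{k}\overline{\{x_n:n\in N_\alpha,\ n>k\}}^{\,w^*}$; that is, $u$ is a $w^*$-cluster point of the subsequence indexed by $N_\alpha$, whence $\Vert d_\alpha+u\Vert=1+\Vert d_\alpha\Vert$ by the previous step. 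By density $\Vert x+u\Vert=1+\Vert x\Vert$ for all $x\in X$, and $x=0$ gives $\Vert u\Vert=1$, so $u$ is $L$-orthogonal.

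The one delicate point, and the only place where the cardinal hypothesis enters, is that a pseudointersection must exist at every stage of the recursion; this works exactly because each proper initial segment of the family has cardinality $<\mathfrak p$. It is worth stressing that we never need a pseudointersection of the full length-$\kappa$ family, which could fail to exist when $\kappa=\mathfrak p$: at the end a plain free ultrafilter suffices, and such an ultrafilter always exists. The remaining verifications—that a subsequence of an $L$-orthogonal sequence is $L$-orthogonal, and that a $\mathcal U$-limit along a set $N\in\mathcal U$ is a $w^*$-cluster point of the subsequence indexed by $N$—are routine.
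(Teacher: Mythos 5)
Your proof is correct and follows essentially the same route as the paper's: a transfinite recursion of length at most $\mathfrak{p}$ producing a $\subseteq^*$-decreasing family of index sets via pseudointersections and Lemmata \ref{lemma:previmaure} and \ref{lemma:sepamaurey}, followed by a compactness argument to extract a common $w^*$-cluster point. The only cosmetic differences are that you work with the one-dimensional spans of a dense set instead of a family of separable subspaces, and you phrase the final selection as an ultrafilter limit rather than via the finite intersection property of the closed sets $\overline{\{x_j: j\in N_\alpha,\ j>n\}}^{w^*}$; both are equivalent.
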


\begin{proof}
Let $\{Z_\alpha\}_{\alpha < \mathfrak{p}}$ be a family of separable subspaces of $X$  whose union is dense in $X$ and such that the sequence $\{x_n\}_{n \in \natu}$ is contained in each $Z_\alpha$. Fix a sequence $(\varepsilon_n)_n$ of positive real numbers converging to zero and increasing sequences $\{F_n^\alpha:n \in \N\}$ of finite-dimensional subspaces of $X$ such that $\bigcup_{n\in \N}F_n^\alpha$ is dense in $Z_\alpha$ for every $\alpha<\mathfrak{p}$. Our aim will be to construct a chain of subsequences $\{x_n\}_{n \in N_\alpha}$ for every $\alpha<\mathfrak{p}$ such that
\begin{enumerate}
\item[(i)] If $\beta<\alpha$ then $N_\alpha \subseteq^* N_\beta$;
\item[(ii)] $\{x_n\}_{n\in N_\alpha}$ satisfies the hypothesis of Lemma \ref{lemma:sepamaurey} with respect to $Z_\alpha$,  $\{F_n^\alpha:n \in \N\}$ and $(\varepsilon_n)_n$.
\end{enumerate}
The construction will be done by transfinite induction on $\alpha<\mathfrak{p}$. We can take $\{x_n\}_{n \in N_0}$ a subsequence of $\{x_n\}_{n \in \natu}$ like in Lemma \ref{lemma:previmaure} for $Z=Z_0$,  $\{F_n^0:n \in \N\}$ and $(\varepsilon_n)_n$. For the inductive step, assume $\alpha>0$ and that $\{x_n\}_{n \in N_\beta}$ has been constructed for every $\beta<\alpha$. Let us construct $\{x_n\}_{n \in N_\alpha}$. First, since $\alpha < \mathfrak{p}$, there exists an infinite set $M_\alpha$ such that $M_\alpha \subseteq^* N_\beta $ for every $\beta < \alpha$. The subsequence $\{x_n\}_{n \in N_\alpha}$ is obtained by applying Lemma \ref{lemma:previmaure} to the sequence $\{x_n\}_{n \in M_\alpha}$ and the space $Z_\alpha$. This choice guarantees property (ii), while property (i) follows from the fact that $N_\alpha \subseteq M_\alpha$.  This completes the inductive construction.

 For every $\alpha<\mathfrak{p}$ and $n\in\mathbb N$ define $A_{n,\alpha}:=\overline{\{x_j: j\in N_\alpha, j>n\}}^{w^*}\subseteq B_{X^{**}}$. By property (i), the family $\{A_{n,\alpha}: \alpha<\mathfrak{p}, n\in\mathbb N\}$ has the finite intersection property. By compactness, there exists $u$ that belongs to all sets $A_{n,\alpha}$ for $n\in\mathbb N$, $\alpha<\mathfrak{p}$. This $u$ is a $w^*$-cluster point of the sequence $\{x_n\}_{n\in N_\alpha}$ for every $\alpha<\mathfrak{p}$, and in particular, it is a $w^*$-cluster point of $\{x_n\}_{n\in\mathbb N}$. Finally, we prove that $u$ is $L$-orthogonal to $X$. By  Lemma \ref{lemma:sepamaurey}, for any $x\in Z_\alpha$ and any $w^*$-cluster point $v$ of $\{x_n\}_{n\in N_\alpha}$ we have that
$$\Vert x+v\Vert=1+\Vert x\Vert.$$
Consequently $\Vert x+u\Vert=1+\Vert x\Vert$ whenever $x\in Z_\alpha$ for some $\alpha<\mathfrak{p}$. Since $\bigcup_{\alpha<\mathfrak{p}}Z_\alpha$ is dense in $X$, we conclude that $u$ is $L$-orthogonal as desired.\end{proof}

%
%

\section{Limits through selective ultrafilters}\label{sect:selective}


A nonprincipal ultrafilter $\mathcal{U}$ is \emph{selective} (a.k.a. \emph{Ramsey}) if for every partition $\mathbb{N} = \bigcup_n A_n$ into sets $A_n\not\in\mathcal{U}$ there exists a set in the ultrafilter that meets each set of the partition at exactly one point. Selective ultrafilters exist under CH, even under $\mathfrak{p}=\mathfrak{c}$, and in many other models of set theory, cf. \cite[Proposition 10.9, Chapter 25]{halbeisen} and \cite[Section 4.5]{barto}. We will not work with the previous definition but with the following characterization due to Mathias \cite[Theorem 2.12]{mathias}.

\begin{theorem}\label{Mathiastheorem}
	A nonprincipal ultrafilter $\mathcal U$ is selective if and only if the intersection of $\mathcal{U}$ with any dense analytic ideal is nonempty.
\end{theorem}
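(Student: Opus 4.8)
The plan is to establish the two implications by quite different means: the forward implication (selective $\Rightarrow$ meeting every dense analytic ideal) rests on the local Ramsey property of selective ultrafilters, whereas the converse is cleanest by contraposition, through an explicit construction of a dense $F_\sigma$ ideal disjoint from $\mathcal{U}$.

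\textbf{Forward direction.} Assume $\mathcal{U}$ is selective and let $\mathcal{I}$ be a dense analytic ideal. The family $\mathcal{A}=\mathcal{I}\cap[\mathbb{N}]^{\omega}$ of infinite members of $\mathcal{I}$ is analytic, being the intersection of the analytic set $\mathcal{I}$ with the $G_\delta$ set $[\mathbb{N}]^{\omega}$. I would then invoke the Galvin--Prikry--Silver--Mathias theorem in its form for selective ultrafilters: since $\mathcal{U}$ is selective and $\mathcal{A}$ is analytic, there is $A\in\mathcal{U}$ homogeneous for $\mathcal{A}$, i.e.\ either $[A]^{\omega}\subseteq\mathcal{A}$ or $[A]^{\omega}\cap\mathcal{A}=\emptyset$. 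The second alternative is impossible: by density of $\mathcal{I}$ the infinite set $A$ contains an infinite $B\subseteq A$ with $B\in\mathcal{I}$, so $B\in[A]^{\omega}\cap\mathcal{A}$. Hence $[A]^{\omega}\subseteq\mathcal{I}$, and since $A\in[A]^{\omega}$ we get $A\in\mathcal{I}$; as $A\in\mathcal{U}$ this yields $A\in\mathcal{U}\cap\mathcal{I}$.

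\textbf{Converse direction.} I would argue by contraposition. If $\mathcal{U}$ is not selective there is a partition $\mathbb{N}=\bigcup_n A_n$ with every $A_n\notin\mathcal{U}$ and no selector in $\mathcal{U}$, where a selector is an infinite set meeting each block in at most one point (the ``exactly one'' and ``at most one'' versions agree, as a set in $\mathcal{U}$ meeting each block at most once can be enlarged inside $\mathcal{U}$ to meet every block exactly once). Consider
$$\mathcal{I}=\Big\{B\subseteq\mathbb{N}\ :\ \exists\, m,k\in\mathbb{N}\ \text{such that}\ |B\cap A_n|\leq m\ \text{for all}\ n\geq k\Big\}.$$
This is an ideal: it is hereditary, and if $B,B'$ obey bounds $(m,k)$ and $(m',k')$ then $B\cup B'$ obeys $(m+m',\max\{k,k'\})$. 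It is $F_\sigma$, hence analytic, since for fixed $m,k$ the condition $\forall n\geq k\,(|B\cap A_n|\leq m)$ is closed. It is dense: given an infinite $X$, if $X$ meets infinitely many blocks then picking one point of $X$ from each block met gives an infinite selector $Y\subseteq X$ with $Y\in\mathcal{I}$; if $X$ meets only finitely many blocks then some $X\cap A_{n_0}$ is infinite and $Y=X\cap A_{n_0}\in\mathcal{I}$ (bound $k=n_0+1$). Finally $\mathcal{I}\cap\mathcal{U}=\emptyset$: writing $\mathcal{U}^{*}=\{B:\mathbb{N}\setminus B\in\mathcal{U}\}=\mathcal{P}(\mathbb{N})\setminus\mathcal{U}$, any $B\in\mathcal{I}$ with bound $(m,k)$ splits as $B=\big(B\cap\bigcup_{n<k}A_n\big)\cup\big(B\setminus\bigcup_{n<k}A_n\big)$; the first piece lies in $\mathcal{U}^{*}$ since it is contained in a finite union of blocks, while the second has at most $m$ points per block, hence is a union of $m$ selectors, each in $\mathcal{U}^{*}$ by the no-selector hypothesis. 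As $\mathcal{U}^{*}$ is an ideal, $B\in\mathcal{U}^{*}$, i.e.\ $B\notin\mathcal{U}$. Thus $\mathcal{I}$ is a dense analytic ideal missing $\mathcal{U}$, which contradicts the hypothesis.

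\textbf{Main obstacle.} The converse is elementary once the right ideal is found; the only delicate point is that a union of selectors need not be a selector, which is exactly why the definition must allow a bound $m$ rather than $1$, and what makes closure under finite unions work. The genuine difficulty sits in the forward direction, whose whole weight is the local Ramsey property of selective ultrafilters. If a self-contained proof were demanded, I would establish that property by a fusion along $\mathcal{U}$: represent $\mathcal{A}$ as the projection of a tree and use selectivity repeatedly to diagonalize through the tree, producing the homogeneous $A\in\mathcal{U}$, in the style of the Galvin--Prikry and Mathias arguments.
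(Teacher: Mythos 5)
The paper does not prove this statement at all: it is quoted verbatim from Mathias (\cite[Theorem 2.12]{mathias}) and used as a black box, so there is no internal proof to compare against. Judged on its own, your argument is correct. The converse direction is complete and elementary: the ideal $\mathcal{I}=\{B:\exists m,k\ \forall n\geq k\ |B\cap A_n|\leq m\}$ is indeed hereditary, closed under finite unions (bounds add), $F_\sigma$ (for fixed $m,k$ the condition is determined coordinatewise by forbidding $m+1$ points of $B$ in a single block), dense, and disjoint from $\mathcal{U}$ by the decomposition into a finite union of blocks plus at most $m$ partial selectors; the remark that ``at most one'' and ``exactly one'' selectors are interchangeable (enlarge upward inside the ultrafilter) is exactly the point one must check, and you check it. Note that this is essentially the same combinatorial device the paper itself uses later in the proof of Corollary \ref{CorOrthogonalNoQPoint}. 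The forward direction is correct as a reduction, but it is not self-contained: the entire weight rests on the local Ramsey property of selective ultrafilters for analytic sets (that every analytic $\mathcal{A}\subseteq[\mathbb{N}]^{\omega}$ admits a homogeneous set in $\mathcal{U}$), which is precisely the main theorem of Mathias's paper and is at least as deep as the statement being proved. You acknowledge this and sketch the fusion strategy, and your application of it (density of $\mathcal{I}$ rules out the negative alternative, so $[A]^{\omega}\subseteq\mathcal{I}$ and in particular $A\in\mathcal{U}\cap\mathcal{I}$) is sound; just be aware that a referee would regard the hard half as cited rather than proved, which puts you in the same position as the paper itself.
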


Let us explain the terminology used. An ideal $I$ is called \textit{dense} (or \emph{tall}) if for every infinite set $A\subset\mathbb{N}$ there exists a further infinite subset $B\subset A$ with $B\in I$. A \emph{Polish space} is a topological space that is homeomorphic to a separable complete metric space. We can view $\mathcal{P}(\mathbb{N})$ as a Polish space through the natural bijection with $\{0,1\}^\mathbb{N}$ that identifies each set $A$ with its characteristic function. A subset $Y$ of a Polish space is called \emph{analytic} if there is a continuous surjection from a Polish space $C$ onto $Y$. We refer to \cite{Kechris} for the basics. The following is an elementary fact (and it holds more generally even for analytic $J$), but we state it and prove it here for the convenience of the reader. 

\begin{fact}\label{idealclosed}
	If $J\subset \mathcal{P}(\mathbb{N})$ is a closed set, then the ideal $I$ generated by $J$,
	$$I = \left\{A\subseteq\mathbb{N} : \exists B_1,\ldots,B_n\in J \ A\subseteq  B_1\cup\cdots\cup B_n\right\},$$
	is analytic. 
\end{fact}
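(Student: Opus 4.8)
The plan is to realize $I$ as a countable union of continuous images of Polish spaces, exploiting the standard closure properties of the class of analytic subsets of a Polish space: it is stable under continuous images (equivalently, projections) and under countable unions. Since membership in $I$ requires only \emph{finitely many} elements of $J$, we may write
$$ I = \bigcup_{n \geq 1} I_n, \qquad I_n := \left\{ A \subseteq \mathbb{N} : \exists B_1, \ldots, B_n \in J,\ A \subseteq B_1 \cup \cdots \cup B_n \right\}, $$
so it suffices to prove that each $I_n$ is analytic, since a countable union of analytic sets is analytic.

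First I would fix $n$ and consider the relation
$$ R_n := \left\{ (A, B_1, \ldots, B_n) \in \mathcal{P}(\mathbb{N})^{n+1} : A \subseteq B_1 \cup \cdots \cup B_n \right\}. $$
The key observation is that $R_n$ is \emph{closed} in the compact metrizable space $\mathcal{P}(\mathbb{N})^{n+1} \cong (\{0,1\}^{\mathbb{N}})^{n+1}$. Indeed, under the identification of sets with characteristic functions, the condition $A \subseteq B_1 \cup \cdots \cup B_n$ is equivalent to $\bigwedge_{k \in \mathbb{N}} \big( \chi_A(k) = 1 \Rightarrow \max_i \chi_{B_i}(k) = 1 \big)$; for each fixed $k$ this is a clopen condition depending on only finitely many coordinates, and $R_n$ is the intersection over all $k$ of these clopen sets, hence closed.

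Next, since $J$ is closed, the set $\mathcal{P}(\mathbb{N}) \times J^n$ is closed in $\mathcal{P}(\mathbb{N})^{n+1}$, so $C_n := R_n \cap (\mathcal{P}(\mathbb{N}) \times J^n)$ is a closed subset of a compact metrizable (hence Polish) space and is therefore itself Polish. The set $I_n$ is exactly the image of $C_n$ under the projection $(A, B_1, \ldots, B_n) \mapsto A$, which is continuous. Thus $I_n$ is a continuous image of a Polish space, that is, analytic, and the conclusion follows by taking the countable union over $n$.

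I do not expect a genuine obstacle here: the whole argument is bookkeeping with the closure properties of analytic sets, and the only point deserving a moment's care is the verification that the containment relation $R_n$ is closed, which reduces to noting that it is decided coordinate-by-coordinate. I would also record the parenthetical strengthening to analytic $J$ by the same proof: one merely replaces ``closed'' by ``analytic'' for the factor $\mathcal{P}(\mathbb{N}) \times J^n$, using that the intersection of a closed set with an analytic set is analytic and that projections preserve analyticity.
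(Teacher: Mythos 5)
Your proof is correct and takes essentially the same approach as the paper: both realize $I$ as the continuous projection of a closed subset of a product space built from copies of $J$, the key point in each case being that the containment relation $A\subseteq B_1\cup\cdots\cup B_n$ is closed because it is decided coordinate-by-coordinate. The only difference is bookkeeping: you split $I$ into the countable union $\bigcup_n I_n$ and invoke closure of analytic sets under countable unions, whereas the paper absorbs the index $n$ as an extra coordinate of the single Polish space $\mathcal{P}(\mathbb{N})\times\mathbb{N}\times J^{\mathbb{N}}$ and uses one projection.
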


\begin{proof}
	Consider $Z = \mathcal{P}(\mathbb{N})\times \mathbb{N}\times J^\mathbb{N}$, that we endow with the product topology. Countable products of Polish spaces are Polish, so this is a Polish space. The set
	$$C = \{(A,n,B_1,B_2,\ldots)\in Z : A\subseteq B_1\cup\cdots\cup B_n\}$$
	 is closed in $Z$. Indeed, if $(A,n,B_1,B_2,\ldots)\in Z\setminus C$ then there is $x\in A\setminus  (B_1\cup\cdots\cup B_n)$ and the set $U=\{(A',n',B_1',B_2',\ldots): x\in A', ~n'=n, ~x\notin B_i' \mbox{ for every }i\leq n \}$ is an open neighborhood of $(A,n,B_1,B_2,\ldots)$ with $Z\cap C= \emptyset$. Since $C$ is closed in $Z$, it follows that $C$ is also Polish. The projection onto the first coordinate provides a continuous surjection from $C$ onto $I$.
\end{proof}

Remember that, given a sequence $\{x_n\}$ in a topological space and a filter $\mathcal{F}$, the point $x$ is the limit of $\{x_n\}$ through the filter $\mathcal{F}$ if for every $W$ neighborhood of $x$, we have $\{n : x_n\in W\}\in\mathcal{F}$. In such a case, we write $x = \lim_\mathcal{F} x_n$. 
The closure of a sequence can be characterized as the set of all limits of $\{x_n\}$ through ultrafilters, and the cluster points are the limits through nonprincipal ultrafilters. In a Hausdorff space limits through filters are unique and in a compact space limits through ultrafilters always exist.

\begin{theorem}\label{theo:selectpositive}
Let $\{x_n\}$ be an $L$-orthogonal sequence in a Banach space $X$, and let $\mathcal U$ be a selective ultrafilter. Then the $w^*$-limit of $\{x_n\}$ through $\mathcal{U}$ is an $L$-orthogonal element.
\end{theorem}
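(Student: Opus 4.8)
The plan is to fix an arbitrary $x\in X$ and manufacture a single set $A'\in\mathcal{U}$ for which the subsequence $\{x_n:n\in A'\}$ satisfies the hypotheses of Lemma \ref{lemma:sepamaurey} (namely the block inequality, which Lemma \ref{lemma:previmaure} produces) relative to the separable space $Z:=\overline{\operatorname{span}}(\{x\}\cup\{x_n:n\in\mathbb{N}\})$. Since $u=w^*\text{-}\lim_{\mathcal{U}}x_n$ and $A'\in\mathcal{U}$, the point $u$ is a $w^*$-cluster point of $\{x_n:n\in A'\}$: for every $w^*$-neighbourhood $W$ of $u$ the set $\{n:x_n\in W\}$ lies in $\mathcal{U}$, so its intersection with $A'$ lies in $\mathcal{U}$ and is in particular infinite. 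Lemma \ref{lemma:sepamaurey} then yields $\|x+u\|=1+\|x\|$. As $u$ is the fixed $\mathcal{U}$-limit while $x$ is arbitrary (the set $A'$ is allowed to depend on $x$), this equality holds for every $x\in X$; combined with $\|u\|\le 1$ (the ball $B_{X^{**}}$ is $w^*$-closed) and the case $x=0$, which forces $\|u\|=1$, this shows that $u$ is $L$-orthogonal.

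Everything thus reduces to producing such an $A'$, and this is where selectivity enters. First I would fix, as in Lemma \ref{lemma:previmaure}, an increasing sequence $\{F_k\}$ of finite-dimensional subspaces with dense union in $Z$, scalars $\varepsilon_k\downarrow 0$, and $\delta_k>0$ with $1-\varepsilon_k<\prod_{j>k}(1-\delta_j)$. Using Lemma \ref{lemma:auxiliar}, to each finite set $s\subseteq\mathbb{N}$ I attach a threshold $g(s)\in\mathbb{N}$ so large that every $x_m$ with $m>g(s)$ is $(1-\delta_{|s|+1})$-orthogonal to $\operatorname{span}(F_{|s|+1}\cup\{x_i:i\in s\})$, and I enlarge $g$ so that it becomes monotone ($s\subseteq t\Rightarrow g(s)\le g(t)$) with $g(s)\ge\max s$. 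Call a set $A=\{a_1<a_2<\cdots\}$ (finite or infinite) \emph{$g$-fast} if $a_{k+1}>g(\{a_1,\dots,a_k\})$ for every $k\ge 0$, so in particular $a_1>g(\emptyset)$, and let $J$ be the collection of all $g$-fast sets. By the choice of $g$, if $A$ is infinite and $g$-fast then the enumeration $y_k:=x_{a_k}$ meets the orthogonality requirement of Lemma \ref{lemma:previmaure}; hence every infinite member of $J$ is a good index set in the sense above.

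Next I would check the two structural properties of $J$. It is closed in $\mathcal{P}(\mathbb{N})\cong\{0,1\}^{\mathbb{N}}$: if $A^{(i)}\to A$ with each $A^{(i)}\in J$, then for every $k$ the first $k+1$ elements of $A$ already sit in an initial segment shared by $A^{(i)}$ for large $i$, so the defining inequality at step $k$ passes to the limit and $A\in J$ (the step $a_1>g(\emptyset)$ is exactly what prevents a sequence of $g$-fast sets from converging to a forbidden slowly growing set). It is downward closed: if $A\in J$ and $A'\subseteq A$, then writing $a'_{k+1}=a_m$ with $m\ge k+1$ and $\{a'_1,\dots,a'_k\}\subseteq\{a_1,\dots,a_{m-1}\}$, monotonicity gives $g(\{a'_1,\dots,a'_k\})\le g(\{a_1,\dots,a_{m-1}\})<a_m=a'_{k+1}$. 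Now $J$ is \emph{not} an ideal, so I pass to the ideal $I$ it generates; by Fact \ref{idealclosed} this $I$ is analytic, and it is dense because any infinite set contains an infinite $g$-fast subset (choose $a_1>g(\emptyset)$, then $a_{k+1}>g(\{a_1,\dots,a_k\})$, greedily). By Mathias' characterisation (Theorem \ref{Mathiastheorem}), selectivity yields a set $A\in\mathcal{U}\cap I$. Writing $A\subseteq B_1\cup\cdots\cup B_m$ with $B_i\in J$, we have $A=\bigcup_i(A\cap B_i)$, so some $A\cap B_i\in\mathcal{U}$ because $\mathcal{U}$ is an ultrafilter; this set is infinite, contained in the $g$-fast set $B_i$, and hence $g$-fast by downward closedness. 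Taking $A':=A\cap B_i$ completes the construction.

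The main obstacle, and the place where the preparatory results must be combined, is the tension between two facts: the collection of \emph{good} (diagonally fast-growing) index sets is closed but \emph{not} closed under unions, whereas Mathias' theorem concerns \emph{ideals}. The reconciling device is precisely Fact \ref{idealclosed}: one generates an ideal from the closed family $J$, meets it with $\mathcal{U}$ by selectivity, and then uses downward closedness of $J$ together with the ultrafilter property to peel off a genuinely $g$-fast member lying in $\mathcal{U}$. Getting the definition of $g$-fast calibrated correctly—monotonicity of $g$ for downward closedness, the initial clause $a_1>g(\emptyset)$ for closedness of $J$, and the index bookkeeping so that the $g$-fast condition reproduces exactly the hypothesis of Lemma \ref{lemma:previmaure}—is the delicate part on which the whole scheme depends.
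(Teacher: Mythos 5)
Your proposal is correct and follows essentially the same strategy as the paper's proof: both construct a closed, downward-closed family $J$ of ``good'' index sets, pass to the analytic ideal it generates via Fact~\ref{idealclosed}, check density using Lemma~\ref{lemma:previmaure}, apply Mathias' characterisation (Theorem~\ref{Mathiastheorem}) to find a member of $\mathcal{U}\cap I$, peel off a set in $\mathcal{U}\cap J$ with the ultrafilter property, and conclude with Lemma~\ref{lemma:sepamaurey}. The only difference is in the presentation of $J$: the paper defines it directly by the norm inequality on convex blocks (the conclusion of Lemma~\ref{lemma:previmaure}, taken for $Z=\operatorname{span}\{z\}$, with a homogeneity remark to handle the index shift), whereas you encode the inductive step of that lemma combinatorially through the threshold function $g$ and $g$-fast sets; both choices make closedness and density routine to verify.
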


\begin{proof}
We call $x^{**} := \lim_\mathcal{U}x_n$. Let $z\in X$ and let us prove that $\Vert z+x^{**}\Vert=1+\|z\|$. We fix a sequence $\{\varepsilon_n\}$ of positive numbers converging to $0$. Define $J\subseteq \mathcal P(\mathbb N)$ as the family of all subsets $A\subseteq \mathbb{N}$ such that 
$$(1-\varepsilon_n)(1+\Vert z\Vert)\leq \left\Vert z+w\right\Vert$$
whenever $w$ is a convex linear combination of vectors $\{x_m : m\in B\}$ where $B\subseteq A$ and $|\{m\in A : m<\min(B)\}|\geq n$. 

We claim that $J$ is a closed subset of $\mathcal P(\mathbb N)$. We check that the complement of $J$ is open. The first observation is that, in the definition of $J$, it does not matter if we consider only finite convex combinations or possibly infinite convex combinations. So if $A\not\in J$ it is because we can find a finite convex combination $w = \sum_{j\in B} \alpha_j x_j$ with $B\subseteq A$, $|\{m\in A : m<\min(B)\}|\geq n$ and  $(1-\varepsilon_n)(1+\Vert  z\Vert)> \left\Vert  z+w\right\Vert$. Then $$V = \{A'\subset \mathbb{N} : A'\cap\{1,,\ldots,\max(B)\} = A \cap\{1,\ldots,\max(B)\}\}$$
is a neighborhood of $A$ in $\mathcal{P}(\mathbb{N})$ that is disjoint from $J$ because all sets $A'\in V$ fail to be in $J$ for the same reason as $A$.

By Fact~\ref{idealclosed}, the ideal $I$ generated by $J$ is analytic. Moreover, $I$ is dense because, given any infinite set $A\subseteq\mathbb N$, the sequence $\{x_n\}_{n\in A}$ is $L$-orthogonal, so an application of Lemma \ref{lemma:previmaure}, taking as $Z=span\{z\}$ yields an infinite subset $B\subseteq A$ such that $B\in J$.

Consequently, by Theorem~\ref{Mathiastheorem}, there exists $A\in \mathcal U\cap I$. Since $A\in I$,  $A$ is contained in a finite union of elements of $J$. Since $A\in\mathcal{U}$ and $\mathcal{U}$ is an ultrafilter, one of the sets in that union satisfies $B\in \mathcal U\cap J$. As $B\in\mathcal U$, $x^{**} = \lim_\mathcal{U}x_n$ is a $w^*$-cluster point of $\{x_n\}_{n\in B}$. 
Finally, the fact that $B\in J$ implies that the sequence $\{x_n:n\in B\}$ satisfies the assumptions of Lemma \ref{lemma:sepamaurey} for $Z=span\{z\}$ by a homogeneity argument (see e.g. \cite[Lemma 2.2]{kad}). Hence $\Vert z+x^{**}\Vert=1+\Vert z\Vert$.\end{proof}

One remark is that, since selective ultrafilters exist under the assumption that $\mathfrak{p} = \mathfrak{c}$, for Banach spaces of density $\mathfrak{c}$ Theorem \ref{theo:selectpositive} is more informative than Theorem \ref{theo:octahedral}.

We devote the rest of this section to discuss a question by Kadets, Shepelska and Werner from \cite{ksw}. They proved \cite[Theorem 4.3]{ksw} that every Banach space containing $\ell_1$ admits an equivalent renorming with an $L$-orthogonal sequence. On the other hand, every Banach space containing $\ell_1$ admits an equivalent renorming with $L$-orthogonals \cite[Theorem II.4]{god}. It was highlighted at the very end of \cite{ksw} that the relation between these results was unclear. 
Notice that Theorem \ref{theo:selectpositive} states that, under the existence of selective ultafilters (and, in particular, under CH), the existence of an $L$-orthogonal sequence implies the existence of  $L$-orthogonals in the bidual and, consequently, the renorming of \cite[Theorem 4.3]{ksw} produces an $L$-orthogonal element.

We will show that, in the reverse way, the renorming described in \cite[Theorem II.4]{god} has an $L$-orthogonal sequence. Let $X$ be a Banach space containing a subspace $Y$ which is isomorphic to $\ell_1$. We assume, up to an equivalent renorming, that $Y$ is isometrically isomorphic to $\ell_1$, and let $\{e_n\}$ be the canonical $\ell_1$ basis. Let $i:Y\longrightarrow X$ be the inclusion mapping and $Q:X^*\longrightarrow Y^*=\ell_\infty$ the canonical quotient map ($Q=i^*$). Define
$$\mathcal K:=\{K\subseteq B_{X^*}:K\mbox{ is } w^*\mbox{-compact and } Q(K)=\ext{B_{Y^*}}\}.$$
Pick an element $K_0\in\mathcal K$ which is minimal. Define
$$K_1:=(K_0\cup -K_0)+\{y\in Y^\perp: \Vert y\Vert\leq 2\},$$
and finally
$$B:=\overline{\co}^{w^*}(K_1).$$
In \cite[Theorem II.4]{god} it is proved that $B$ is the unit ball of an equivalent dual norm on $X^*$ whose predual norm $|||\cdot|||$ satisfies that $(X,|||\cdot|||)$ has $L$-orthogonals.

We aim to prove that $|||x+i(e_n)|||\rightarrow 1+|||x|||$ holds for every $x\in X$. So pick $x\in X$ and $\varepsilon>0$, and let us find $m\in\mathbb N$ so that $|||x+i(e_n)|||>1+|||x|||-\varepsilon$ holds for every $n\geq m$.
Since the dual ball of $(X,|||\cdot|||)$ is $B$, we get that $K_1$ is norming for $X$. Hence, we can find $h\in K_1$ such that $h(x)>|||x|||-\varepsilon$. By the form of $K_1$ we can write $h=f+g$ for $f\in K_0\cup -K_0$ (we assume with no loss of generality that $f\in K_0$, since $-K_0$ is also minimal in $\mathcal K$) and $g\in Y^\perp$ with $\|g\| \leq 2$. Define
$$V:=\{\phi\in K_0: \phi(x)>|||x|||-g(x)-\varepsilon\},$$
which is a non-empty ($f\in V$) $w^*$-open subset of $K_0$. Notice that $Q(V)$ has non-empty $w^*$-interior in $\ext{B_{Y^*}}$ by the minimality of $K_0$, since otherwise $K_0\setminus V$ would belong to $\mathcal K$, contradicting the minimality of $K_0$. So $Q(V)$ contains some non-emtpy $w^*$-open subset of $\ext{B_{Y^*}}$, say $U$. The $w^*$-topology of $B_{Y^\ast}=B_{\ell_\infty} = [-1,1]^\mathbb{N}$ coincides with the product topology, so we can find $m\in\mathbb N$ with the property that if $u_1\in U, u_2\in \ext{B_{Y^*}}$ and $u_1(e_i)=u_2(e_i)$ for $1\leq i\leq m$ then $u_2\in U$. With this property in mind, define $u\in Y^*$ by the equation
$$u(e_i):=\left\{\begin{array}{cc}
Q(f)(e_i) & \mbox{if }i\leq m,\\
1 & \mbox{otherwise}
\end{array} \right.$$
It is clear that $u\in \ext{B_{Y^*}}$ and, by the property previously described of $U$, we get that $u\in U\subseteq Q(V)$, so there exists $f'\in V$ such that $Q(f')=u$. This means that
$$f'(i(e_n))=Q(f')(e_n)=u(e_n)=1$$
holds for every $n>m$. Moreover, since $f'\in V$ we get that $f'(x)+g(x)>|||x|||-\varepsilon$. Hence, for $n>m$, we get
\[
\begin{split}
|||x+i(e_n)|||& \geq (f'+g)(x+i(e_n))>|||x|||-\varepsilon+f'(i(e_n))+g(i(e_n))=1+|||x|||-\varepsilon
\end{split}
\]
since $g(y)=0$ for every $y\in Y$. Since $\varepsilon>0$ was arbitrary we conclude the desired result.

\section{A Lebesgue filter}\label{section:Lebesgue}

Let $(A_n)$ be a partition of $\nat$ with $\lim_n |A_n| =\infty$ (notice that we allow the sets $A_n$ to be infinite). Set $\I$ the ideal
\begin{equation}\label{eq:ideallebesgue}
	\mathcal{I}= \set{B\subseteq \mathbb{N}: \sup_n|A_n \cap B| < \infty }.
\end{equation}

Following the notation of \cite{KadetsLeonov}, a filter $\mathcal F$ over $\mathbb N$ is said to be \textit{Lebesgue} if the Lebesgue dominated converge theorem holds for convergence through $\mathcal{F}$: for every (finite) measure space $(\Omega,\Sigma,\mu)$ and for every sequence $f_n$ of measurable functions on $\Omega$ with  $\vert f_n\vert$ dominated by a fixed integrable function $g\in L_1(\Omega,\Sigma,\mu)$, if $f_n$ pointwise converges to $0$ through the filter $\mathcal{F}$, then $\lim_\mathcal{F}\int_{\Omega} f_n\ d\mu =  0$.

It will be convenient here to refer these notions to ideals instead of filters. So an ideal $\I$ will be called Lebegue if so is its dual filter $\mathcal F:=\{\mathbb N\setminus A: A\in I\}$. Similarly, convergence through the ideal $\mathcal{I}$ means convergence through its dual filter $\mathcal{F}$. The aim of this section if to show that the ideal $\mathcal{I}$ defined above is Lebesgue.

In the following lemma, given a set $S$ and $n\in\N$, we denote by $[S]^n:=\{A \subseteq S: |A|=n\}$. We would like to thank Grzegorz Plebanek for a substantial simplification of the proof of the following combinatorial lemma.

\begin{lemma}
	\label{LEMAUXCombinatorics}
	Let $(\Omega, \Sigma, \mu)$ be a finite measure space and $\varepsilon>\varepsilon'>0$. Then, for every $N \in \nat$ there is a number $R \in \nat$ such that 
	if $\set{\Omega_n}_{n\leq R}$ is a family of measurable sets with $\mu(\Omega_n)> \varepsilon$ for every $n \leq R$ then  
	$$\mu \left(\bigcup_{S \in [\set{1,\ldots,R}]^N}\bigcap_{j \in S} \Omega_j \right) > \varepsilon'.$$   
\end{lemma}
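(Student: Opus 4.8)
The plan is to reinterpret the combinatorial union $\bigcup_{S \in [\{1,\ldots,R\}]^N} \bigcap_{j\in S}\Omega_j$ as a superlevel set of a counting function. Define
$$f(\omega) = \sum_{n=1}^R \chi_{\Omega_n}(\omega),$$
the number of the sets $\Omega_1,\ldots,\Omega_R$ to which $\omega$ belongs. A point $\omega$ lies in $\bigcap_{j\in S}\Omega_j$ for some $S$ of cardinality $N$ if and only if $\omega$ belongs to at least $N$ of the $\Omega_n$, that is, if and only if $f(\omega)\geq N$. Thus the set whose measure we must estimate from below is exactly $\{f\geq N\}$, and the problem reduces to a lower bound for $\mu(\{f\geq N\})$.

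First I would compute the integral of $f$. By additivity and the hypothesis $\mu(\Omega_n) > \varepsilon$,
$$\int_\Omega f\, d\mu = \sum_{n=1}^R \mu(\Omega_n) > R\varepsilon.$$
Then I split the integral according to $\{f < N\}$ and $\{f\geq N\}$ and use the two pointwise bounds $f\leq N-1$ on the first set and $f\leq R$ on the second (the latter because $f$ counts membership among only $R$ sets):
$$\int_\Omega f\, d\mu \leq (N-1)\,\mu(\Omega) + R\,\mu(\{f\geq N\}).$$
Combining the two displays gives $R\varepsilon < (N-1)\mu(\Omega) + R\,\mu(\{f\geq N\})$, and hence
$$\mu(\{f\geq N\}) > \varepsilon - \frac{(N-1)\mu(\Omega)}{R}.$$

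Finally I would choose $R$ large enough to absorb the error term: any integer $R > \frac{(N-1)\mu(\Omega)}{\varepsilon-\varepsilon'}$ forces $\varepsilon - (N-1)\mu(\Omega)/R > \varepsilon'$, which is precisely the desired conclusion. Here the finiteness of $\mu(\Omega)$ is exactly what makes such an $R$ available, and $R$ depends only on $N$, $\varepsilon$, $\varepsilon'$ and $\mu(\Omega)$, as required. There is no genuine obstacle once one recognizes the union as the multiplicity superlevel set $\{f\geq N\}$; this reformulation is the crux, and the remainder is a one-line Markov-type estimate. The only point demanding mild care is that on $\{f\geq N\}$ one uses the crude bound $f\leq R$ rather than anything sharper, and it is precisely this crudeness that dictates how large $R$ must be taken.
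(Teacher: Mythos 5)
Your proof is correct and follows essentially the same route as the paper's: both identify the union with the superlevel set $\{f\geq N\}$ of the counting function $\sum_n\chi_{\Omega_n}$, integrate, and split the integral to get a Markov-type lower bound on $\mu(\{f\geq N\})$, differing only in the cosmetic choice of bounding $f$ by $N-1$ versus $N$ on the complement. The paper additionally notes the (harmless) reduction to $\mu$ positive, which you use implicitly.
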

\begin{proof}
	Without loss of generality, we suppose that $\mu$ is a positive measure. We claim that it is enough to take $R$ big enough so that $\frac{R\varepsilon-N\mu(\Omega)}{R-N}>\varepsilon'$. 
	Fix $\set{\Omega_n}_{n\leq R}$ a family of measurable sets with $\mu(\Omega_n)> \varepsilon$ and consider $g=\sum_{n\leq R} \chi_{\Omega_n}$  the sum of the characteristic functions of the sets $\Omega_n$. Set $$ C:=\bigcup_{S \in [\set{1,\ldots,R}]^N}\bigcap_{j \in S} \Omega_j .$$
	Notice that $g(x)\geq N$ if and only if 
	$x \in C$.
	Then, 
	$$R\varepsilon < \sum_{n\leq R}\mu(\Omega_n)= \int_{\Omega} gd\mu = \int_C gd\mu + \int_{\Omega\setminus C} g d\mu \leq  \int_C gd\mu + \mu(\Omega\setminus C) N \leq$$
	$$ \leq R \mu(C)+(\mu(\Omega)-\mu(C))N = (R-N)\mu(C)+N\mu(\Omega).$$
	Thus, $\mu(C)>\frac{R\varepsilon-N\mu(\Omega)}{R-N}>\varepsilon'$ as desired.
\end{proof}

\begin{proposition}
	\label{PropLebesgueIdeal}
	The ideal $\I$ defined at \eqref{eq:ideallebesgue} is Lebesgue.
\end{proposition}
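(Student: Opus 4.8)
The plan is to prove that the ideal $\I$ from \eqref{eq:ideallebesgue} is Lebesgue by showing its dual filter satisfies the dominated convergence conclusion. Let $(\Omega,\Sigma,\mu)$ be a finite measure space, let $(f_n)$ be measurable functions with $|f_n|\leq g$ for a fixed $g\in L_1(\mu)$, and suppose $f_n\to 0$ through the dual filter $\mathcal F$ of $\I$. I want to conclude $\lim_{\mathcal F}\int_\Omega f_n\,d\mu = 0$. By splitting into positive and negative parts and rescaling, I would reduce to showing the following: if $0\leq f_n\leq g$ and $f_n\to 0$ through $\mathcal F$, then $\int f_n\,d\mu \to 0$ through $\mathcal F$. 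Suppose for contradiction that this fails. Then there is $\delta>0$ so that the set $E=\{n:\int f_n\,d\mu > \delta\}$ is not in $\I$, i.e.\ $E$ meets the partition blocks arbitrarily largely: $\sup_n|A_n\cap E|=\infty$.

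The heart of the argument is to convert the assumption $\sup_n |A_n\cap E|=\infty$, together with $\int f_n\,d\mu>\delta$ for $n\in E$, into a genuine failure of pointwise convergence through $\mathcal F$, contradicting $f_n\to 0$. First I would extract from $\int f_n\,d\mu>\delta$ a uniform lower bound on the measure of a level set: since $0\leq f_n\leq M$ off a set of small $g$-mass and $\int f_n > \delta$, there is a fixed $\eta>0$ and a fixed $\varepsilon>0$ (depending only on $\delta$ and $g$) such that $\mu(\{f_n>\eta\})>\varepsilon$ for every $n\in E$. Concretely, write $\Omega_n=\{x:f_n(x)>\eta\}$; choosing $\eta$ small and using $\int_{\{g>L\}} g\,d\mu$ small for large $L$, a Chebyshev-type estimate gives $\mu(\Omega_n)>\varepsilon$ for all $n\in E$.

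Now I would invoke Lemma \ref{LEMAUXCombinatorics}. Fix any $N\in\nat$ and any target $\varepsilon'<\varepsilon$, and let $R=R(N,\varepsilon,\varepsilon')$ be the number it provides. Because $\sup_n|A_n\cap E|=\infty$, I can pick a single block $A_{k}$ with $|A_k\cap E|\geq R$, and inside it select $R$ indices $n_1,\dots,n_R\in A_k\cap E$; the corresponding sets $\Omega_{n_1},\dots,\Omega_{n_R}$ all have measure $>\varepsilon$, so the lemma yields a point-set of measure $>\varepsilon'$ each of whose points lies in at least $N$ of these $\Omega_{n_j}$. Letting $N$ (and $R$) grow, a diagonal/compactness argument over the increasing blocks produces a single point $x\in\Omega$ (in fact a set of positive measure) for which $f_n(x)>\eta$ holds for infinitely many indices $n$ lying in $\I$-positively-many blocks. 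The key point is that the collection of such indices $n$ cannot lie in $\I$: having $N$ witnesses inside one block $A_k$ forces $|A_k\cap\{n:f_n(x)>\eta\}|\geq N$ for that $x$, so $\sup_k|A_k\cap\{n:f_n(x)>\eta\}|=\infty$, meaning this index set is not in $\I$ and its complement is not in $\mathcal F$. Hence $f_n(x)\not\to 0$ through $\mathcal F$ at such an $x$, contradicting the hypothesis.

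The main obstacle I anticipate is making the last step fully rigorous: Lemma \ref{LEMAUXCombinatorics} delivers, for each fixed $N$, a positive-measure set of points that are covered $N$ times \emph{within a single block}, but I need to pin down one point (or a positive-measure set) that is covered unboundedly often across a fixed choice of blocks, so as to defeat $\mathcal F$-convergence there. This requires organizing the block choices coherently — for instance nesting the witnessing sets as $N$ increases by working inside a fixed sequence of blocks $A_{k_1},A_{k_2},\dots$ with $|A_{k_i}\cap E|$ growing, applying the lemma with $N=i$ in block $A_{k_i}$, and using $\mu(\Omega)<\infty$ together with a Borel--Cantelli or limsup-of-positive-measure-sets argument to find $x$ in infinitely many of the resulting covered regions. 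Care is needed to ensure the repeated indices all come from distinct blocks so that $\sup_k|A_k\cap\{n:f_n(x)>\eta\}|=\infty$ genuinely holds and thereby contradicts $f_n(x)\to_{\mathcal F} 0$.
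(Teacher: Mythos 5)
Your proposal is correct and follows essentially the same route as the paper: the paper first reduces to characteristic functions by citing \cite[Theorem 2.2]{KadetsLeonov} (where you instead carry out the truncation/Chebyshev reduction by hand), and then applies Lemma \ref{LEMAUXCombinatorics} to the level sets inside blocks containing many heavy indices, exactly as you describe. The ``main obstacle'' you flag is handled in the paper by the standard continuity-from-above argument: the tails $\bigcup_{j>n}\Omega_j'$ of the witnessing sets form a decreasing sequence of measure $>\varepsilon'$, so any $x$ in their intersection lies in infinitely many $\Omega_j'$ and hence satisfies $\sup_m|A_m\cap\{n : x\in\Omega_n\}|=\infty$, with no need for the chosen blocks to be distinct.
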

\begin{proof}
	By \cite[Theorem 2.2]{KadetsLeonov}, it is enough to check that if a sequence of measurable functions $\chi_{\Omega_n}$ pointwise $\I$-converges to zero then $\lim_{\mathcal{I}}\mu(\Omega_n)= 0$.  
	Suppose by contradiction that $\mu(\Omega_n)$ does not $\I$-converge to zero. This means that there is $\varepsilon>0$ such that
	$\{n \in \nat: \mu(\Omega_n)> \varepsilon\} \notin \I$. By definition of $\I$, this means that $\sup_m |\{n \in A_m:\mu(\Omega_n)> \varepsilon \}| = \infty$.
	Fix $0<\varepsilon'<\varepsilon$. Then, by Lemma \ref{LEMAUXCombinatorics}, for every $j$ there is $m_j \in \nat$ big enough such that  $$\mu \left(\bigcup_{S\in [A_{m_j}]^j}\bigcap_{n \in S} \Omega_n \right) > \varepsilon'.$$  
	
	Set $\Omega_j'=\bigcup_{S\in [A_{m_j}]^j}\bigcap_{n \in S} \Omega_n $.
	Then, $\left(\bigcup_{j>n} \Omega_j'\right)_{n \in \nat}$ is a decreasing sequence of sets with measure $>\varepsilon'$.
	This implies that $\mu(\bigcap_{n \in \nat} \bigcup_{j>n} \Omega_j') \geq \varepsilon'$. In particular, there exists $x \in\bigcap_{n \in \nat} \bigcup_{j>n} \Omega_j'$. Notice that this implies that the set $\{j \in \nat: x\in \Omega_j'\}$ is infinite. For every $j$ in this set we have $|\{n\in A_{m_j}: x\in \Omega_n\}|\geq j$.
	In particular, this implies that the set $\{n \in \nat: x\in \Omega_n\} \notin \I$.
	But, since $\chi_{\Omega_n}$ pointwise $\I$-converges to zero, we have that $\{n \in \nat: \chi_{\Omega_n}(x)=1\}=\{n \in \nat: x\in \Omega_n\}\in \I$, which is a contradiction.
\end{proof}

We finish this section with a general fact about Lebesgue filters, cf. \cite{KadetsLeonov2}.
\begin{proposition}\label{pointwiseweakLebesgue}
	If $\mathcal{F}$ is a Lebesgue filter and $\{f_n\}$ is a bounded sequence of $C(K)$ that pointwise converges to $f\in C(K)$ through the filter $\mathcal{F}$, then the sequence weakly converges to $f$ through the filter $\mathcal{F}$.
\end{proposition}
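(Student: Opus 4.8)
The plan is to translate weak convergence in $C(K)$ into a statement about integrals against measures and then to invoke the Lebesgue property of $\mathcal{F}$ directly. By the Riesz representation theorem, the dual $C(K)^*$ consists of the regular Borel signed measures on $K$, so saying that $\{f_n\}$ converges weakly to $f$ through $\mathcal{F}$ is exactly saying that $\lim_\mathcal{F}\int_K f_n\,d\mu=\int_K f\,d\mu$ for every such measure $\mu$. I would first replace $f_n$ by $g_n:=f_n-f$, which is again a bounded sequence, say $\|g_n\|\leq M$ for all $n$, and which still $\mathcal{F}$-converges pointwise to $0$, since for each $x\in K$ we have $\lim_\mathcal{F}(f_n(x)-f(x))=f(x)-f(x)=0$. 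After this reduction it suffices to prove that $\lim_\mathcal{F}\int_K g_n\,d\mu=0$ for every regular Borel signed measure $\mu$.

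Next I would reduce to the case of a positive measure. Writing $\mu=\mu^+-\mu^-$ for its Jordan decomposition, both $\mu^+$ and $\mu^-$ are finite positive measures, so by linearity of the integral it is enough to establish $\lim_\mathcal{F}\int_K g_n\,d\nu=0$ for an arbitrary finite positive measure $\nu$, applied afterwards to $\nu=\mu^+$ and $\nu=\mu^-$.

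At this point the Lebesgue property finishes the argument. The functions $g_n$ are continuous, hence Borel measurable on the finite measure space $(K,\mathrm{Borel},\nu)$; they are dominated by the constant function $M$, which lies in $L_1(K,\nu)$ precisely because $\nu$ is finite; and they converge pointwise to $0$ through $\mathcal{F}$. Since $\mathcal{F}$ is a Lebesgue filter, the dominated convergence theorem for $\mathcal{F}$-convergence applies and yields $\lim_\mathcal{F}\int_K g_n\,d\nu=0$, which is what is needed.

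There is essentially no hard step here: the entire content of the proposition is absorbed into the definition of a Lebesgue filter, and the only things to verify are the routine reduction to positive measures and the observation that finiteness of the measure turns the uniform bound $M$ into an integrable dominating function. The single point worth stating explicitly is that the regularity and the signedness of $\mu$ cause no difficulty, since the Jordan decomposition keeps us inside the class of finite (regular) Borel measures to which the Lebesgue hypothesis applies.
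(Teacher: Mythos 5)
Your argument is correct and follows essentially the same route as the paper: both translate weak convergence into convergence of integrals via the Riesz representation theorem and then invoke the definition of a Lebesgue filter. You merely spell out the routine reductions (subtracting $f$ and Jordan-decomposing the signed measure) that the paper leaves implicit.
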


\begin{proof}
	First of all, it follows easily from the definitions that $\lim_\mathcal{F}f_n = f$ in the pointwise topology if and only if $\lim_\mathcal{F}f_n(x) = f(x)$ for every $x\in K$, and that  $\lim_\mathcal{F}f_n = f$ in the weak topology if and only if $\lim_\mathcal{F}y^\ast(f_n) = y^\ast(f)$ for every $y^\ast\in C(K)^*$.
	By Riesz's theorem, every element of $C(K)^\ast$ is given by integration with respect to a measure $\nu$, so the conclusion follows from the definition of a Lebesgue filter.
\end{proof}

\section{Cluster points of $L$-orthogonal sequences that are not $L$-orthogonal}\label{section:counterexamples}

We begin with an example of an $L$-orthogonal sequence that weakly accumulates at $0$. Far from being a sophisticated space, this may occur in the canonical octahedral space $\ell_1$.

\begin{example}\label{example:ejeml1}
There exists an orthogonal sequence $\{x_n\}$ in $\ell_1$ so that $0\in \overline{\{x_n\}}^{w}$. In particular, there exists an ultrafilter $\mathcal U$ so that $\lim_\mathcal U x_n=0$.
\end{example}

\begin{proof}
Define, for every $n\in\mathbb N$, $A_n$ to be the set of elements $x\in B_{\ell_1}$ such that $\Vert x\Vert \geq 1-\frac{1}{2^n}$, $x_i\in\{\frac{z}{4^n}: z\in\mathbb Z\}$ if $2^n<i\leq 2^{n+1}$ and $x_i=0$ otherwise. Notice that $A_n$ is finite for every $n\in\mathbb N$, so $A:=\bigcup\limits_{n\in\mathbb N}A_n$ is a countable subset of $S_{\ell_1}$.

Note that if we consider $A$ as a sequence in $S_{\ell_1}$ then $A$ is an orthogonal sequence. This easily follows from the density of finitely-supported sequences in $\ell_1$ and from the fact that, for every $k\in\mathbb N$, the set $\{x\in A: \supp(x)\cap \{1,\ldots, k\}\neq \emptyset\}$ is finite for every $k\in\mathbb N$.

Let us prove that $0$ is a weak cluster point of $A$. To this end, pick a basic weak neighbourhood $O:=\{x\in B_{\ell_1}: \vert x_i^*(x)\vert<\varepsilon, \mbox{ for every } 1\leq i\leq p\}$ of $0$, with $x^*_1,\ldots, x^*_p\in S_{\ell_\infty}$ and $\varepsilon>0$, and let us prove that $O\cap A\neq\emptyset$. For this, pick $n$ large enough so that $2^n>p$ and $\frac{1}{2^n}<\varepsilon$. By a dimension argument we can find an element $y\in S_{\ell_1}$ which is supported on the coordinates $\{2^n+1,\ldots, 2^{n+1}\}$ and such that $x_i^*(y)=0$ holds for $1\leq i\leq p$. Let $x\in B_{\ell_1}$ be supported on the coordinates $\{2^n+1,\ldots, 2^{n+1}\}$ such that $x_i=\frac{k}{4^n}$ and $\vert x_i-y_i\vert<\frac{1}{4^n}$. Note hence that $\Vert x-y\Vert<\frac{1}{2^n}$. This implies, on the one hand, that $\Vert x\Vert>1-\frac{1}{2^n}$, from where $x\in A_n\subseteq A$. On the other hand, $ x_i^*(x)=0$ for every $i\leq p$, which proves that $x\in O$ as desired.
\end{proof}

Our next step is to get other examples of that sort where we have a better understanding of what kind of ultrafilters are giving non $L$-orthogonal cluster points.

%
%
%
%
%

\begin{proposition}
	\label{PropOrthogonalSeqI}
	Suppose that the ideal $\I$ defined at \eqref{eq:ideallebesgue} is determined by a partition $(A_n)$ of $\nat$ into finite sets. Then, there exists an $L$-orthogonal sequence $(e_n)$ in $C(\{0,1\}^\mathbb{N})$ that $\I$-converges to the constant function $1$ in the weak topology. In particular, $\lim_\mathcal{U} e_n$ is the constant function $1$ for every ultrafilter $\mathcal{U}$ disjoint from $\mathcal{I}$.
\end{proposition}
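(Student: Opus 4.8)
The plan is to build the sequence so that two features hold at once: it is $L$-orthogonal (which in $C(\{0,1\}^{\mathbb N})$ I will verify through Proposition~\ref{CKorthogonalseq}), and each $e_n$ equals the constant $1$ off a clopen set $U_n$ that is \emph{rare per block}, in the sense that for every point $x$ the set $\{n:x\in U_n\}$ meets each piece $A_m$ of the partition in at most one point. This second feature forces $\{n:x\in U_n\}\in\mathcal I$ for every $x$, hence $e_n\to 1$ pointwise through the filter $\mathcal F$ dual to $\mathcal I$. Since $\{e_n\}$ is bounded and $\mathcal I$ is Lebesgue by Proposition~\ref{PropLebesgueIdeal}, Proposition~\ref{pointwiseweakLebesgue} upgrades pointwise $\mathcal I$-convergence to weak $\mathcal I$-convergence to $1$; and for the ``in particular'' it suffices to observe that an ultrafilter disjoint from $\mathcal I$ contains $\mathcal F$, so convergence through $\mathcal F$ passes to convergence through $\mathcal U$.

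For the construction I would allocate to the $m$-th block $A_m=\{n^m_1<\dots<n^m_{k_m}\}$ (with $k_m=|A_m|$) a set of $k_m$ fresh coordinates $t^m_1<\dots<t^m_{k_m}$ of $\{0,1\}^{\mathbb N}$, taken pairwise disjoint across $m$ and with increasing ranges, so that the least block-$m$ coordinate tends to $\infty$ as $m\to\infty$. Within the block I set
$$U^m_j=\{x:\ x_{t^m_1}=\dots=x_{t^m_{j-1}}=1,\ x_{t^m_j}=0\}$$
(``the first zero among the block-$m$ coordinates occurs at position $j$'') and define $e_{n^m_j}=1-2\chi_{U^m_j}$, a function equal to $1$ off $U^m_j$ and to $-1$ on $U^m_j$. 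The point of this first-zero pattern is that $U^m_1,\dots,U^m_{k_m}$ are pairwise disjoint, so for any $x$ the restriction of $x$ to the block-$m$ coordinates lies in at most one $U^m_j$; this is precisely the ``at most one per block'' property.

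It then remains to run two routine verifications. For $L$-orthogonality I take $C_n=U_n^{\,c}$ and $D_n=U_n$, disjoint clopen sets with $e_n\equiv 1$ on $C_n$ and $e_n\equiv -1$ on $D_n$. Given a nonempty basic open set $O$ depending on coordinates $\le p$, for all large $n$ the block containing $n$ uses only coordinates $>p$ (since blocks are finite, the block index of $n$ tends to $\infty$), so $O$ can be completed freely on those high coordinates; hence $O$ meets both $U_n$ and $U_n^{\,c}$, and Proposition~\ref{CKorthogonalseq} yields that $\{e_n\}$ is $L$-orthogonal. For the convergence, fixing $x$ and using the previous paragraph, $\{n:e_n(x)\neq 1\}=\{n:x\in U_n\}$ meets each $A_m$ in at most one point, so it lies in $\mathcal I$ and $e_n(x)\to 1$ through $\mathcal F$.

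I expect the only genuinely delicate point to be the design of the clopen sets $U_n$: they must simultaneously be topologically spread out (each $U_n$ and its complement eventually meeting every open set, which gives $L$-orthogonality) and sparse at every single point across each block (to force membership in $\mathcal I$ for \emph{every} $x$, including troublesome points such as the constant $1$). The first-zero pattern reconciles these demands, since it makes the $U^m_j$ disjoint within a block while each $U^m_j$ still depends only on the large, free coordinates $t^m_1,\dots,t^m_j$. Once this is in place, the passage from pointwise to weak $\mathcal I$-convergence is exactly Proposition~\ref{pointwiseweakLebesgue} applied to the Lebesgue ideal of Proposition~\ref{PropLebesgueIdeal}, and the final statement follows because every ultrafilter $\mathcal U$ disjoint from $\mathcal I$ refines $\mathcal F$.
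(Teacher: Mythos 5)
Your proof is correct, and its convergence half coincides with the paper's: both establish pointwise $\mathcal{I}$-convergence to $1$ at every point by arranging that the set of indices where $e_n$ deviates from $1$ meets each block $A_m$ at most once, and both then invoke Propositions \ref{PropLebesgueIdeal} and \ref{pointwiseweakLebesgue} to upgrade this to weak $\mathcal{I}$-convergence, the ``in particular'' following because any ultrafilter disjoint from $\mathcal{I}$ refines the dual filter. The construction itself, however, takes a genuinely different route. The paper builds the sparsity into the \emph{domain}: it takes the compact set $K\subseteq\{-1,1\}^{\mathbb{N}}$ of sign sequences with at most one $-1$ per block, lets $e_n$ be the $n$-th coordinate function, shows $K$ is perfect, and identifies $K$ with $\{0,1\}^{\mathbb{N}}$ via Brouwer's characterization of the Cantor set. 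You build the sparsity into the \emph{functions}: you keep the full cube and make the deviation sets $U^m_1,\dots,U^m_{k_m}$ pairwise disjoint within each block via the first-zero pattern. Your route avoids Brouwer's theorem and the detour through an auxiliary compactum, at the cost of a slightly more elaborate family of clopen sets; the paper's coordinate functions are simpler, but the topological identification is an extra step. One small point worth making explicit: Proposition \ref{CKorthogonalseq} as stated only asserts the \emph{existence} of some $L$-orthogonal sequence, whereas you need that your specific $e_n$ are $L$-orthogonal. This is harmless, since the proof of that proposition shows that any continuous $[-1,1]$-valued functions equal to $1$ on $C_n$ and $-1$ on $D_n$ form an $L$-orthogonal sequence under the stated topological condition, and your $e_n=1-2\chi_{U_n}$ are exactly of that form with $C_n=U_n^{\,c}$ and $D_n=U_n$; you should just say so rather than cite the proposition as a black box.
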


\begin{proof}

Let	$$ K = \left\{ x = (x_i)\in \{-1,1\}^\mathbb{N} :  \left|\{i\in A_m : x_i=-1\}\right| \leq 1 \mbox{ for every }m\in \N \right\}.$$ Notice that $K$ is a compact subset of $\{-1,1\}^\mathbb{N}$.

{Claim 1.} For every nonempty open subset $V\subset K$ there exists $n_V\in \mathbb{N}$ such that 
$\{x\in V  : x_{n}=-1\}\neq \emptyset$ and  $\{x\in V  : x_{n}=1\}\neq \emptyset$ for every $n > n_V$.

{\textit{Proof of Claim 1.}} We can suppose that $V$ is a nonempty basic open set given by
$$V = \set{x \in K : x_i = y_i \text{ for all } i\leq r  }$$
for some $r$ and some $y\in K$ that can be taken so that $y_i=1$ for $i>r$. Then it is enough to take $n_V = \max(\bigcup_{i\leq r, i \in A_j} A_j )$.

From Claim 1 it follows that $K$ does not have isolated points. So $K$ is a metrizable zero-dimensional perfect compactum, hence homeomorphic to $\{0,1\}^\mathbb{N}$ by Brower's theorem, cf. \cite[Theorem 7.4]{Kechris}. We work in $C(K)$ instead of $C(\{0,1\}^\mathbb{N})$.
Define $e_n\in C(K)$ as the $n$-th coordinate function on $K$.  Let us first check that this is an $L$-orthogonal sequence. Fix a function $f\in C(K)$ and a point $z\in K$ where $|f|$ attains its maximum. Write $f(z) = \eta \|f\|$ with $\eta\in\{-1,1\}$. Given $\varepsilon>0$, we can find an open neighborhood $V$ of $z$ such that $|f(z)-f(x)|<\varepsilon$ for all $x\in V$. By Claim 1, for each $n>n_V$ we can find $x\in V$ such that $x_n = \eta$. Therefore
$$ \|f\| + 1 - \varepsilon = |\eta \|f\| + \eta| -\varepsilon \leq  |f(x)+x_n| \leq \|f+e_n\| \leq \|f\| + 1$$
for $n>n_V$. This shows that the sequence $\{e_n\}$ is $L$-orthogonal.

Finally, notice that for every $x\in K$, 
$$\set{n\in\mathbb{N} : e_n(x) \neq 1} = \set{n\in\mathbb{N} : x_{n} = -1} \in \mathcal{I}.$$

Therefore, $(e_n)$ pointwise converges to $1$ through the ideal $\mathcal{I}$. 
Since $\I$ is Lebesgue by Proposition \ref{PropLebesgueIdeal}, using Propostion \ref{pointwiseweakLebesgue} we conclude that $(e_n)$ converges to the constant function $1$ in the weak topology through the ideal $\mathcal{I}$.
\end{proof}

A nonprincipal ultrafilter $\mathcal{U}$ is a \emph{$Q$-point} if for every partition of $\mathbb{N}$ into nonempty finite sets $\mathbb{N} = \bigcup_n A_n$ there exists a set in the ultrafilter that meets each set of the partition at exactly one point. This is equivalent to saying that there is a set $B\in\mathcal{U}$ such that $\sup_n |B\cap A_n| <+\infty$ (see the proof of Corollary \ref{CorOrthogonalNoQPoint}).  It is clear that every selective ultrafilter is a $Q$-point. There are models of set theory where no $Q$-points exist \cite{miller}.

\begin{corollary}
\label{CorOrthogonalNoQPoint}
Suppose that $\U$ is an ultrafilter which is not a $Q$-point. Then, there is an $L$-orthogonal sequence $(e_n)$ in $C(\{0,1\}^\nat)$ that converges through $\mathcal{U}$ to the constant function $1$ in the weak topology.
\end{corollary}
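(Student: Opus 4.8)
The plan is to reduce Corollary~\ref{CorOrthogonalNoQPoint} to Proposition~\ref{PropOrthogonalSeqI} by manufacturing, from the failure of the $Q$-point property, a partition of $\nat$ into finite sets whose associated ideal $\I$ is disjoint from $\U$. The essential first step is to record the equivalence announced in the statement preceding the corollary: an ultrafilter $\U$ is a $Q$-point if and only if for every partition $\nat=\bigcup_n A_n$ into nonempty finite sets there exists $B\in\U$ with $\sup_n|B\cap A_n|<+\infty$. One direction is trivial (a selector gives $|B\cap A_n|\le 1$). For the converse, given $B\in\U$ with $\sup_n|B\cap A_n|=c<\infty$, I would split each block $A_n\cap B$ into the $c$ singletons (padding with elements outside $B$ if necessary) to refine the partition, and use that $\U$ is an ultrafilter to pick, inside $B$, a transversal hitting each refined block at most once; since $B\in\U$, this transversal lies in $\U$ and witnesses the $Q$-point property.

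Granting that equivalence, the proof is short. Since $\U$ is not a $Q$-point, there exists a partition $(A_n)$ of $\nat$ into nonempty finite sets such that \emph{no} $B\in\U$ satisfies $\sup_n|B\cap A_n|<+\infty$; equivalently, every $B\in\U$ has $\sup_n|B\cap A_n|=+\infty$, which says precisely that $B\notin\I$ where $\I$ is the ideal of \eqref{eq:ideallebesgue} attached to this partition. Thus $\U\cap\I=\emptyset$, i.e.\ $\U$ is disjoint from $\I$. I should note that one may assume $\lim_n|A_n|=\infty$, as required by the hypotheses of Section~\ref{section:Lebesgue}: if the block sizes stayed bounded, the ideal $\I$ would be trivial and every infinite set would meet the blocks boundedly, contradicting that $\U$ fails the $Q$-point property; more carefully, one passes to a coarser partition grouping consecutive blocks so that the new block sizes tend to infinity, and the failure of the bounded-intersection property is preserved under this coarsening.

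With such a partition fixed, Proposition~\ref{PropOrthogonalSeqI} applies verbatim: it produces an $L$-orthogonal sequence $(e_n)$ in $C(\{0,1\}^\nat)$ that $\I$-converges to the constant function $1$ in the weak topology, and its final sentence guarantees that $\lim_\U e_n=1$ for every ultrafilter $\U$ disjoint from $\I$. Since we have arranged $\U\cap\I=\emptyset$, this yields the desired sequence converging weakly to $1$ through $\U$, completing the proof.

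The main obstacle, and the only place requiring genuine care, is the reformulation of the $Q$-point property in terms of bounded intersections together with the attendant reduction to a partition with $\lim_n|A_n|=\infty$; everything after that is a direct citation of Proposition~\ref{PropOrthogonalSeqI}. In particular I would double-check that coarsening the partition to force the block sizes to infinity does not accidentally create a set in $\U$ with bounded intersections with the coarser blocks, which is clear since bounded intersection with coarser blocks implies bounded intersection with the finer ones.
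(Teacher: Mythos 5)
Your proof is correct and follows essentially the same route as the paper's: both extract from the failure of the $Q$-point property a partition whose associated ideal $\I$ is disjoint from $\U$ (via the same argument that a set with bounded intersections splits into finitely many partial selectors, one of which would lie in the ultrafilter), and then invoke Proposition~\ref{PropOrthogonalSeqI}. The only difference is that you additionally, and correctly, address the requirement $\lim_n|A_n|=\infty$ from Section~\ref{section:Lebesgue} by coarsening the partition, a point the paper's proof leaves implicit.
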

\begin{proof}
By definition, if $\U$ is not a $Q$-point, then there is a partition $(A_n)$ of $\nat$ into nonempty finite sets such that if $|B\cap A_n|\leq 1$ for every $n$ then $B\notin \U$. Notice that if  $\sup_n |B\cap A_n| <\infty$, then we also get $B\not\in\U$. This is because $B$ can be written as a finite disjoint union of sets $B_j$ for which  $|B_j\cap A_n|\leq 1$ for every $n$. Thus, if $\I$ is the ideal associated to the sets $(A_n)$ as in \eqref{eq:ideallebesgue}, then $\U$ and $\I$ are disjoint. The conclusion follows from Proposition \ref{PropOrthogonalSeqI}.
\end{proof}

Now, we get a single nonseparable space where all failures happen simultaneously.

\begin{theorem}\label{theo:sucesinLorto}
	There is an $L$-orthogonal sequence in a Banach space such that none of whose limits through non $Q$-point ultrafilters is $L$-orthogonal. So if no Q-points exist, there is a Banach space with an $L$-orthogonal sequence  with no $L$-orthogonal elements in its $w^*$-closure.
\end{theorem}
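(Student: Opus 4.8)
The plan is to build a single nonseparable space in which \emph{every} non-$Q$-point ultrafilter is exhibited as a ``bad'' ultrafilter for some coordinate block, so that its limit always sees one of the problematic constant functions produced in Corollary \ref{CorOrthogonalNoQPoint}. First I would recall that, by Corollary \ref{CorOrthogonalNoQPoint}, for every partition $(A_n^\ft)$ of $\N$ into nonempty finite sets with associated ideal $\I_\ft$, we obtain an $L$-orthogonal sequence $(e_n^\ft)_n$ in a copy of $C(\{0,1\}^\N)$ that converges weakly through any $\mathcal U$ disjoint from $\I_\ft$ to the constant function $1$. The point of quantifying over all such partitions is that an ultrafilter $\mathcal U$ fails to be a $Q$-point precisely when there is \emph{some} partition $(A_n)$ into finite sets for which $\mathcal U$ is disjoint from $\I_\ft$; thus if we run all partitions simultaneously, every non-$Q$-point $\mathcal U$ will be caught by at least one of them.

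The construction I would carry out is to take the index family $T$ of all partitions of $\N$ into nonempty finite sets, and for each $\ft\in T$ the space $Y_\ft=C(\{0,1\}^\N)$ carrying the $L$-orthogonal sequence $(e_n^\ft)_n$ from Proposition \ref{PropOrthogonalSeqI}. I would then form the $\ell_1$-sum $X=\left(\bigoplus_{\ft\in T} Y_\ft\right)_{\ell_1}$ and diagonalize the sequences: define $x_n\in X$ to be the element whose $\ft$-coordinate equals $e_n^\ft$ for every $\ft\in T$. One checks that $(x_n)$ is $L$-orthogonal in $X$. For this I would use that each coordinate sequence is $L$-orthogonal and that the $\ell_1$-sum norm of $z=(z_\ft)_\ft$ is $\sum_\ft \|z_\ft\|$: given $z$ with finitely supported approximation on coordinates $\ft_1,\dots,\ft_k$, an application of Lemma \ref{lemma:auxiliar} coordinate-by-coordinate on these finitely many blocks yields $\|z+x_n\|\to 1+\|z\|$, since the tail coordinates contribute $L$-orthogonally and the $\ell_1$ structure adds norms. (Here one must be a little careful that the single index $n$ works uniformly across the finitely many relevant blocks, which follows from taking the maximum of the finitely many thresholds.)

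Next I would show that no limit through a non-$Q$-point ultrafilter is $L$-orthogonal. Fix $\mathcal U$ that is not a $Q$-point. By the definition recalled before Corollary \ref{CorOrthogonalNoQPoint}, there is a partition $\ft=(A_n)\in T$ into finite sets with $\mathcal U\cap\I_\ft=\emptyset$. Let $P_\ft:X^{**}\to Y_\ft^{**}$ be the natural coordinate projection; it is $w^*$-to-$w^*$ continuous, so $P_\ft(\lim_{\mathcal U} x_n)=\lim_{\mathcal U} e_n^\ft$ in the $w^*$-topology. By Proposition \ref{PropOrthogonalSeqI} this $w^*$-limit equals the constant function $\mathbf 1\in Y_\ft\subseteq Y_\ft^{**}$, because $(e_n^\ft)$ weakly $\I_\ft$-converges to $\mathbf 1$ and $\mathcal U$ is disjoint from $\I_\ft$. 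Now if $u=\lim_{\mathcal U}x_n$ were $L$-orthogonal to $X$, then testing against $x=(\,\mathbf 1\text{ in coordinate }\ft,\ 0\text{ elsewhere})\in X$ would require $\|x+u\|=1+\|x\|=2$; but the $\ft$-coordinate of $x+u$ is $\mathbf 1+\mathbf 1=2\cdot\mathbf 1$, and the element $x\in X$ already lies inside $Y_\ft$, so the $L$-orthogonality fails because $u$ agrees with $x$ on that coordinate rather than being asymptotically disjoint from it. Concretely, $\|x+u\|\ge\|P_\ft(x+u)\|=2$ is automatic, but the reverse failure is what breaks orthogonality: since $u$ has norm-one $\ft$-coordinate equal to $\mathbf 1$ while all other coordinates carry the remaining mass, the equality $\|x+u\|=1+\|x\|$ would force the $\ft$-coordinate to add \emph{disjointly}, contradicting that both $x$ and $u$ point in the same $\mathbf 1$ direction there.

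I expect the main obstacle to be the last step: making precise why coincidence of the $\ft$-coordinate with the constant $\mathbf 1$ genuinely violates $L$-orthogonality rather than merely being consistent with it. The clean way is to pick the test vector inside $Y_\ft$ more cleverly, namely $x=i_\ft(\mathbf 1)$ where $i_\ft:Y_\ft\hookrightarrow X$ is the canonical inclusion, and to compute $\|x-u\|$ instead: $L$-orthogonality would force $\|x-u\|=1+\|x\|=2$ as well (by Lemma \ref{lemma:abal} applied to the pair), yet the $\ft$-coordinate of $x-u$ is $\mathbf 1-\mathbf 1=0$, so its norm there drops, and using that $u\in B_{X^{**}}$ has total norm at most $1$ one gets $\|x-u\|\le \|x\|-\|P_\ft u\|+\| (I-P_\ft)u\|\le 1+ (1-\|P_\ft u\|)<2$, a contradiction. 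Getting this inequality chain exactly right in the finitely-additive bidual of an $\ell_1$-sum, where $X^{**}=\left(\bigoplus_\ft Y_\ft^{**}\right)_{\ell_\infty}$, is the delicate computation, but it is routine once the decomposition of $u$ into its $\ft$-part and its complement is used together with the fact that $\|P_\ft u\|=\|\mathbf 1\|=1$ already saturates the relevant coordinate.
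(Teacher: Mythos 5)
There is a fatal gap at the very first step of your construction: the diagonal sequence you define does not live in the space you build. Your index set $T$ (all partitions of $\N$ into nonempty finite sets) is uncountable, and you set $x_n$ to be the element of the $\ell_1$-sum $X=\bigl(\bigoplus_{\ft\in T}Y_\ft\bigr)_{\ell_1}$ whose $\ft$-coordinate is $e_n^\ft$ for \emph{every} $\ft$. Since $\|e_n^\ft\|=1$ for all $\ft$, this element has $\ell_1$-norm $\sum_{\ft\in T}1=\infty$, so $x_n\notin X$. The same problem propagates: for $z$ supported on a single coordinate, $\|z+x_n\|=\|z_{\ft_1}+e_n^{\ft_1}\|+\sum_{\ft\neq\ft_1}\|e_n^\ft\|$ is infinite, so the claim that ``the $\ell_1$ structure adds norms'' to give $\|z+x_n\|\to 1+\|z\|$ is vacuous. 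Replacing the $\ell_1$-sum by a $c_0$-sum does not help (the diagonal is still excluded), and your closing identification $X^{**}=\bigl(\bigoplus_\ft Y_\ft^{**}\bigr)_{\ell_\infty}$ is also false for an $\ell_1$-sum (the dual of the $\ell_\infty$-sum $X^*=\bigl(\bigoplus_\ft Y_\ft^*\bigr)_{\ell_\infty}$ is much larger than the $\ell_1$-sum of biduals), which undermines the coordinate-projection computation in your last paragraph.

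The paper circumvents exactly this obstruction by \emph{not} forming a sum of the spaces $Y_\ft$ with a diagonal sequence. Instead it keeps a single copy of $\ell_1$ carrying the canonical basis $(e_n)$, records each ``bad'' ultrafilter by an operator $T:\ell_1\to X_T$ with $(Te_n)$ $L$-orthogonal and $1_T=w\textrm{-}\lim_{\U}Te_n\in S_{X_T}$, and takes $X_{\mathfrak F}=\ell_1\oplus c_0(\mathfrak F)$ with the supremum-type norm $\|x+y\|=\sup_{T}\{\|x\|_1,\|Tx+y_T1_T\|\}$. The sequence is just $(e_n)\subseteq\ell_1$, which trivially belongs to the space, and the sup-norm (rather than a sum) is what makes $L$-orthogonality of $(e_n)$ verifiable coordinate by coordinate. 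The failure of $L$-orthogonality of $e_T^{**}=w^*\textrm{-}\lim_{\U}e_n$ is then witnessed by the test vector $\delta_T\in c_0(\mathfrak F)$, via a Mazur-lemma argument showing $\|e_T^{**}-\delta_T\|\le 1<2$. Your intuition for the second half (testing against a vector that the limit ``agrees with'' in the relevant coordinate, and using $\|x-u\|<1+\|x\|$) is the right one and is essentially what the paper does with $\delta_T$; but the ambient space has to be designed so that the $L$-orthogonal sequence actually exists in it, and your $\ell_1$-sum does not achieve that.
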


\begin{proof} Let $\mathfrak{F}$ be a family of norm-one operators $T:\ell_1 \rightarrow X_T$ such that $(Te_n)$ is an $L$-orthogonal sequence in $X_T$ for every $T \in \mathfrak{F}$. Consider $X_\mathfrak{F}=\ell_1 \oplus c_0(\mathfrak{F})$ endowed with the norm given by
$$ \|x+y\|= \sup_{T \in \mathfrak{F}} \{ \|x\|_1, \|Tx+y_T1_T\|\} $$ 
for every $x \in \ell_1$ and $y= (y_T)_{T \in \F} \in c_0(\mathfrak{F})$. The first key observation is that the copy of the canonical basis $(e_n)$ of $\ell_1$ is an $L$-orthogonal sequence in $X_\mathfrak{F}$.
Indeed, take any $x+y \in X_\mathfrak{F}$ with $x \in \ell_1$ and $y= (y_T)_{T \in \mathfrak{F}} \in c_0(\mathfrak{F})$. 
If $\|x+y\|=\|x\|_1$, then it is immediate that $\lim_n \|e_n+x+y\|=1+\|x\|_1=1+\|x+y\|$, since $e_n$ is $L$-orthogonal in $\ell_1$.
Otherwise 
$\|x+y\|= \sup_{T \in \mathfrak{F}}\|Tx+y_T1_T\|$.
Then, since $(Te_n)$ is an $L$-orthogonal sequence in $X_T$ for every $T\in \mathfrak{F}$, we have that
$$\|e_n+x+y\| \geq \|Te_n+Tx+y_T1_T\|\longrightarrow 1+\|Tx+y_T1_T\|.$$ 
Since the last inequality holds for every $T\in \mathfrak{F}$ and $\sup_{T \in \mathfrak{F}}1+\|Tx+y_T1_T\|=1+\|x+y\|$, we conclude that $(e_n)$ is an $L$-orthogonal sequence in $X_\mathfrak{F}$.

According to Corollary \ref{CorOrthogonalNoQPoint}, for each ultrafilter $\mathcal{U}$ that is not a $Q$-point we can put an operator $T$ in our family $\mathfrak{F}$ so that $1_T := w-\lim_\mathcal{U}Te_n$ is an element of $X_T$ of norm 1. It remains to prove that in such a situation $e_T^{**}=w^*\textendash \lim_{\U} e_n\in X_\mathfrak{F}^{**}$  is not an $L$-orthogonal element.
For this, it is enough to show that $\|e_T^{**}-\delta_T\|\leq 1$, where $\delta_T \in c_0(\mathfrak{F})$ stands for the vector whose coordinates are all zero except the coordinate $T$ which is one. Let $x^*+y^* \in X_\mathfrak{F}^*=\ell_\infty \oplus \ell_1(\mathfrak{F})$ be such that $(x^*+y^*)(e_T^{**}-\delta_T)=x^*(e_T^{**})-y^*(\delta_T)>1$.
If we prove that $\|x^*+y^*\|>1$, then we will have that $\|e_T^{**}-\delta_T\|\leq 1$.

Since  $x^*(e_T^{**})>1+y^*(\delta_T)$ and $e_T^{**}=w^*\textendash \lim_{\U} e_n$, we get that
$$ S=\{ n \in \nat: x^*(e_n)>1+y^*(\delta_T)\} \in \U.$$
On the other hand $w-\lim_\U Te_n - 1_T = 0$, so $0\in\overline{\{Te_n - 1_T : n\in S\}}^w$. By Mazur's lemma,
$$ 0 \in \overline{co}^{\|\cdot \|} \{Te_n-1_T: n \in S\}.$$
Let $x \in co\{e_n: n \in S\}$ be an element with $\|Tx-1_T\| \leq 1$.
Then, $\|x-\delta_T\| = 1$ but
$$ (x^*+y^*)(x-\delta_T)>1+y^*(\delta_T)-y^*(\delta_T)=1,$$

so $\|x^*+y^*\|>1$, and this finishes the proof.
\end{proof}

It is possible to take the construction above a little further. For every ultrafilter $\mathcal{U}$ that is not a $Q$-point and every choice of signs $(\sigma_n)\in \{-1,1\}^\mathbb{N}$, we can put an operator $T:\ell_1\To X_T$ in our family $\mathfrak{F}$ so that  $w-\lim_\mathcal{U}\sigma_nTe_n$ is an element $1_T$ of $X_T$ of norm 1. If there are no $Q$-points, we will get that no element of the $w^*$-closure of $\{e_n, -e_n : n\in\mathbb{N}\}$ is $L$-orthogonal. Let us recall that we do not know whether a negative answer to Question \ref{question:general} is consistent. It is natural to conjecture that a space of this sort might be a counterexample, but we do not know how to discard $L$-orthogonality for an arbitrary element of the bidual. Notice that $X_\mathfrak{F} ^{**}= \ell_\infty^* \oplus \ell_\infty(\mathfrak{F})$.
It can be easily checked that if an element $x^{**}+y^{**} \in \ell_\infty^* \oplus \ell_\infty(\mathfrak{F})$ is $L$-orthogonal, then $y^{**}=0$ and $x^{**} \in \overline{co}^{w^*}\{\sigma e_n: n \in \nat,~\sigma \in \{-1,1\}\}$.
No extreme point of the ball $B_{X_\mathfrak{F}^{**}}$ would be $L$-orthogonal. Does this property imply that $X_\mathfrak{F}^{**}$ does not have $L$-orthogonal elements?

\section{Abundance of $L$-orthogonals}\label{section:abundLorto}

So far, we only worried about the existence or not of $L$-orthogonal elements, but now we will examine how large the set of $L$-orthogonal elements might be in a separable Banach space. One result in this direction is that the set of $L$-orthogonal elements is $w^*$-dense in $B_{X^*}$ if and only if $X$ has the Daugavet property \cite[Theorem 3.2]{rueda}. It is convenient now to remove normalization, so for a Banach space $X$ consider the set
\begin{eqnarray*}L_X &:=& \{u\in X^{**}: \| x +u \| = \|x\| +\|u\| \text{ for all }x\in X\}
\end{eqnarray*}
Note that, with the terminology of the introduction, the set of $L$-orthogonal elements to $X$ is nothing but $L_X\cap S_{X^{**}}$. Moreover, notice that if $u\in L_X\setminus \{0\}$ then $\frac{u}{\Vert u\Vert}$ is an $L$-orthogonal by Lemma \ref{lemma:abal}.

Our aim is to prove, working with injective tensor products, that when $X$ is a separable octahedral space, $L_X$ contains infinite-dimensional Banach spaces.

The following proposition strengthens \cite[Proposition 3.1]{loru}. Let us explain the terminology. $L(X,Y)$ (respectively $K(X,Y)$) is the space of all linear and bounded (respectively linear and compact) operators from $X$ to $Y$. $X\pten Y$ and $X\iten Y$ are the projective and injective tensor product of $X$ and $Y$, respectively. See \cite{rya} for a detailed treatment of the tensor product theory and approximation properties.

\begin{proposition}\label{prop:previocontraeje}
Let $X$ be a uniformly smooth Banach space and let $Y$ be a Banach space such that either $X^*$ or $Y^*$ has the approximation property. Let $T\in  L(X^*,Y^{**}) = (X\iten Y)^{**}=(X^*\pten Y^*)^*$ be such that $\Vert T\Vert=1$ and 
$$\Vert T+S\Vert=2$$
 for every norm-one element $S\in K(X^*,Y) = X\iten Y$. Then $T$ is an isometry and $T(x^*)$ is $L$-orthogonal to $Y$ for every $x^*\in X^*$.
\end{proposition}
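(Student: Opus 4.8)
The plan is to extract the desired isometry and $L$-orthogonality information from the hypothesis $\|T+S\|=2$ for all norm-one $S\in K(X^*,Y)$ by testing against carefully chosen compact operators. First I would fix $x^*\in S_{X^*}$ and aim to show that $\|T(x^*)\|=1$ and that $T(x^*)$ is $L$-orthogonal to $Y$; the general statement about all $x^*\in X^*$ then follows by homogeneity (scaling) together with Lemma \ref{lemma:abal}. The crucial device is that since $X$ is uniformly smooth, $X^*$ is uniformly convex (hence strictly convex and smooth), so each norm-one $x^*$ has a unique norm-one functional $x\in X=(X^*)^*$ (up to the canonical embedding) attaining its norm, i.e.\ a unique supporting vector. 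This uniqueness is what lets me build rank-one compact operators $x\otimes y = x^*\mapsto x^*(x)\, y$ that interact cleanly with $T$.

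The key steps, in order, would be as follows. Given a norm-one $y\in Y$ and the supporting vector $x$ of $x^*$, consider the rank-one operator $S=x\otimes y\in K(X^*,Y)$, which has norm $1$. The hypothesis gives $\|T+x\otimes y\|=2$, meaning there is a sequence $z_k^*\in S_{X^*}$ with $\|T(z_k^*)+z_k^*(x)\,y\|\to 2$. Since both $\|T(z_k^*)\|\le 1$ and $|z_k^*(x)|\le 1$, this forces $\|T(z_k^*)\|\to 1$, $|z_k^*(x)|\to 1$, and moreover the two vectors $T(z_k^*)$ and $z_k^*(x)y$ to be ``nearly aligned''. Here uniform convexity of $X^*$ enters decisively: $|z_k^*(x)|\to 1$ with $x$ the \emph{unique} supporting functional of $x^*$ forces $z_k^*\to \pm x^*$ in norm (this is a standard consequence of uniform convexity, via the Šmulian-type characterization of smoothness/strong exposedness). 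Consequently $T(z_k^*)\to \pm T(x^*)$ in norm (using $\|T\|=1$), and passing to the limit in the alignment condition yields $\|T(x^*)\pm y\|=2$, hence $\|T(x^*)+y\|=1+\|y\|$ after adjusting the sign of $y$. Since $y\in S_Y$ was arbitrary and we can rescale, this is precisely the statement that $T(x^*)$ is $L$-orthogonal to $Y$; taking $y$ appropriately also yields $\|T(x^*)\|=1$, so $T$ is an isometry on the sphere and hence (by homogeneity) an isometry.

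The main obstacle I anticipate is the passage from $|z_k^*(x)|\to 1$ to $z_k^*\to \pm x^*$ in norm, and the careful bookkeeping of which norms ($Y$ versus $Y^{**}$, and the identifications $(X\iten Y)^{**}=L(X^*,Y^{**})$) are being used when I pass to the weak$^*$ limit. The approximation property hypothesis on $X^*$ or $Y^*$ is exactly what guarantees the identification $K(X^*,Y)=X\iten Y$ and the duality $(X\iten Y)^{**}=L(X^*,Y^{**})$, so I must invoke it to justify that $T$ really is the relevant bidual element and that the compact operators $x\otimes y$ live in $X\iten Y$ (see \cite{rya}). A secondary subtlety is ensuring the ``alignment'' argument is quantitative: from $\|a_k+b_k\|\to 2$ with $\|a_k\|\to 1$, $\|b_k\|\to 1$ one does not automatically get $a_k-b_k\to 0$ in a general Banach space, so I would instead pass to the weak$^*$ limit $u$ of $T(z_k^*)$ in $Y^{**}$ and argue that $\|u+y\|=2$ directly, which sidesteps any need for uniform convexity of $Y$ and only uses that $T(x^*)$ is (up to sign) this weak$^*$ limit.
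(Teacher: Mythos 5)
Your proposal is correct and follows essentially the same route as the paper's proof: test $T$ against the rank-one operator $x\otimes y$ with $x\in S_X$ the norming vector of $x^*$ (available by reflexivity), extract a norming sequence in $S_{X^*}$, and use uniform convexity of $X^*$ to force its norm convergence to $\pm x^*$, after which the limit passage gives $\Vert T(x^*)+y\Vert=1+\Vert y\Vert$. Your closing worry about ``alignment'' and the weak$^*$-limit detour is unnecessary (and, as stated, would go the wrong way, since weak$^*$ lower semicontinuity only bounds $\Vert u+y\Vert$ from above): once $z_k^*\to\pm x^*$ in norm, $T(z_k^*)+z_k^*(x)y\to\pm\left(T(x^*)+y\right)$ in norm and continuity of the norm finishes the argument, exactly as in the paper.
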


\begin{proof}
Note that since $X^*$ has the Radon-Nikodym property by reflexivity and since $X^*$ or $Y^*$ has the approximation property, then $(X\iten Y)^*=X^*\pten Y^*$ \cite[Theorem 5.33]{rya}. Consequently $L(X^*,Y^{**}) =(X^*\pten Y^*)^*= (X\iten Y)^{**}$.

We fix $x^*\in S_{X^*}$ and $y\in Y$ and we prove that $\Vert T(x^*)+y\Vert=1+\|y\|$. This will be enough. Since $X$ is reflexive, we can find $x\in S_X$ such that $x^*(x)=1$. Since $\Vert T+x\otimes y\Vert=1+\|y\|$ we can find, for every $n\in\mathbb N$, an element $x_n^*\in S_{X^*}$ and $y_n^*\in S_{Y^*}$ such that
$$1+\|y\|-\frac{1}{n}<y_n^*(T(x_n^*))+x_n^*(x)y_n^*(y).$$
This implies that $x_n^*(x)\rightarrow 1$ and $y_n^*(y)\rightarrow \|y\|$. Moreover, evaluation at $x$ gives that $\Vert x_n^*+x^*\Vert\rightarrow 2$ from where we get, since $X^*$ is uniformly convex, that $\Vert x_n^*-x\Vert\rightarrow 0$. Now
\[
\begin{split}
1+\|y\|-\frac{1}{n}<y_n^*(T(x_n^*))+x_n^*(x)y_n^*(y)& \leq y_n^*(T(x^*))+x^*(x)y_n^*(y)\\ & +\Vert y_n^*\circ T+y_n^*(y)x\Vert \Vert x_n^*-x^*\Vert\\
& \leq \Vert T(x^*)+y\Vert+2\Vert x_n^*-x\Vert.
\end{split}
\] 
Since $\Vert x_n^*-x^*\Vert\rightarrow 0$ we deduce that $\Vert T(x^*)+y\Vert=1+\|y\|$.
\end{proof}

\begin{theorem}\label{theo:espaciaorto}
Let $X$ be a separable Banach space whose norm is octahedral and $Y$ be a separable and uniformly smooth Banach space with the metric approximation property which is finitely representable in $\ell_1$. Then $L_X$ contains an isometric copy of $Y^*$.
\end{theorem}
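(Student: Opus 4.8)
The plan is to deduce the statement from Proposition \ref{prop:previocontraeje} applied with the two factors interchanged. Since $Y$ is uniformly smooth it is reflexive and $Y^*$ is uniformly convex; moreover, being reflexive with the metric approximation property, $Y^*$ has the approximation property (for reflexive spaces the approximation property is self-dual). Thus Proposition \ref{prop:previocontraeje}, read with $Y$ in the role of the uniformly smooth factor and $X$ in the role of the other factor, applies to operators $T\in L(Y^*,X^{**})=(Y\iten X)^{**}$: if $\|T\|=1$ and $\|T+S\|=2$ for every norm-one $S\in K(Y^*,X)=Y\iten X$, then $T$ is an isometry and $T(y^*)\in L_X$ for every $y^*\in Y^*$. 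Consequently $T(Y^*)$ is an isometric copy of $Y^*$ sitting inside $L_X$, and the whole theorem reduces to producing a single such $T$, that is, an $L$-orthogonal element of the Banach space $Y\iten X$. As $X$ and $Y$ are separable, so is $Y\iten X$, and by \cite[Lemma 9.1]{gk} the existence of an $L$-orthogonal element is equivalent to the norm of $Y\iten X$ being octahedral. Hence the core of the proof is the claim: \emph{if $X$ is octahedral and $Y$ is finitely representable in $\ell_1$, then $Y\iten X$ is octahedral.}

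To prove the claim I would exhibit an $L$-orthogonal sequence in $Y\iten X$, built from an $L$-orthogonal sequence $\{x_n\}\subseteq S_X$ of $X$ (which exists by \cite[Theorem 1.1]{ksw}) after refining it through the Maurey machinery of Section \ref{sect:maurey}. Passing to a subsequence as in Lemmas \ref{lemma:previmaure} and \ref{lemma:sepamaurey}, and using that those lemmas allow arbitrary real (hence signed) coefficients, I may assume that every signed block combination $\sum_j \beta_j x_{n_j}$ is $L$-orthogonal to any prescribed separable subspace and satisfies $\|\sum_j\beta_j x_{n_j}\|\approx\sum_j|\beta_j|$, so that the closed span of the $x_n$ is almost isometric to $\ell_1$. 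The model computation is the finite-dimensional case $\ell_1^N\iten X$: identifying this space with $X^N$ under the norm $\|(u_i)\|=\max_{\theta\in\{-1,1\}^N}\|\sum_i\theta_i u_i\|$, the single-coordinate vectors $e_1\otimes x_m=(x_m,0,\dots,0)$ form an $L$-orthogonal sequence, because every extreme point $\theta$ of $B_{\ell_\infty^N}$ has $|\theta_1|=1$, so a maximizing sign pattern for a given $(u_i)$ automatically adds a full $\pm x_m$ to the maximizing vector and Lemma \ref{lemma:abal} closes the estimate. Using that $Y$ is finitely representable in $\ell_1$ together with the metric approximation property, I would transport this model to $Y\iten X$: given a finite-dimensional $G\subseteq Y\iten X$ and $\varepsilon>0$, I locate a finite-dimensional $F\subseteq Y$ carrying the relevant part of $G$, use finite representability to match $F$ against an $\ell_1$-structure, and assemble inside $F\iten X\subseteq Y\iten X$ a norm-one element $w=\sum_j y_j\otimes x_{n_j}$ whose values $w(\psi)$ are, for every norming functional $\psi$ of $F$, norm-$\approx 1$ vectors lying in the $L$-orthogonal cone of $X$; then $\|g+\lambda w\|\geq(1-\varepsilon)(\|g\|+|\lambda|)$ follows by evaluating at such a $\psi$ and invoking the block $L$-orthogonality.

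For the identifications I would argue as in Proposition \ref{prop:previocontraeje}: reflexivity of $Y$ gives $Y^*$ the Radon--Nikod\'ym property, and the approximation property of $Y^*$ yields $(Y\iten X)^*=Y^*\pten X^*$, hence $(Y\iten X)^{**}=L(Y^*,X^{**})$, by \cite[Theorem 5.33]{rya}. The main obstacle is precisely the witness construction of the second paragraph: one must produce, inside the compact operators $K(Y^*,X)$, a norm-one finite-rank operator that simultaneously (i) nearly attains its norm along \emph{every} norming functional of the given finite-dimensional subspace of $Y$ and (ii) takes all these near-norming values in the $L$-orthogonal directions of $X$. Reconciling (i) and (ii) is where finite representability in $\ell_1$ is indispensable, since it is what allows the finite-dimensional geometry of $Y$ to be aligned with the almost-$\ell_1$ span of $\{x_n\}$ furnished by Maurey's theorem; controlling the sign patterns uniformly over the whole norming set is the delicate point of the argument, and is what forces the smoothness and finite-representability hypotheses on $Y$.
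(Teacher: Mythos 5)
Your reduction is exactly the one the paper uses: apply Proposition \ref{prop:previocontraeje} with the roles of the two factors interchanged (the uniformly smooth factor being $Y$), identify $(Y\iten Y)^{**}$ --- more precisely $(Y\iten X)^{**}=L(Y^*,X^{**})$ via reflexivity, RNP and the approximation property of $Y^*$ --- and then invoke \cite[Lemma 9.1]{gk} to reduce everything to showing that the separable space $Y\iten X=K(Y^*,X)$ has an octahedral norm. All of that is correct, and your bookkeeping of which space is uniformly smooth and which dual carries the approximation property is right.

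The gap is in the remaining claim, namely that octahedrality of $X$ together with finite representability of $Y$ in $\ell_1$ forces $Y\iten X$ to be octahedral. The paper does not prove this; it is exactly the content of the cited result \cite[Theorem 3.2]{lr}, which the proof of Theorem \ref{theo:espaciaorto} invokes in a single line. You instead attempt to establish it from scratch, and your sketch stops at the decisive step: producing, for a given finite-dimensional $G\subseteq Y\iten X$ and $\varepsilon>0$, a norm-one finite-rank $w=\sum_j y_j\otimes x_{n_j}$ whose evaluations $w(\psi)$ at \emph{all} norming functionals $\psi$ of the relevant finite-dimensional subspace of $Y$ simultaneously have norm close to $1$ and lie in the (almost) $L$-orthogonal cone spanned by the Maurey subsequence. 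Your finite-dimensional model $\ell_1^N\iten X$ is fine (there the extreme points of $B_{\ell_\infty^N}$ have all coordinates of modulus one, so a single coordinate vector works), but transporting it through a $(1+\varepsilon)$-embedding of a finite-dimensional subspace of $Y$ into $\ell_1^N$ while controlling the sign patterns uniformly over the whole norming set is precisely the hard part, and you explicitly leave it as "the delicate point of the argument" rather than carrying it out. As written, therefore, the proposal is not a complete proof; if you replace that paragraph by a citation of \cite[Theorem 3.2]{lr} (which is stated for exactly this situation), your argument collapses to the paper's proof.
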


\begin{proof}
Since $Y$ is finitely representable in $X$ and has the metric approximation property, the norm of $K(Y,X)=Y^*\iten X$ is octahedral \cite[Theorem 3.2]{lr}. By \cite[Lemma 9.1]{gk} there exists an element $T\in (Y^*\iten X)^{**}=L(Y^*,X^{**})$ which is an $L$-orthogonal to $Y^*\iten X$. Now Proposition \ref{prop:previocontraeje} implies that $T(y^*)\in L_X$ for every $y^*\in Y^*$ and that $T$ is an isometry, so we are done.
\end{proof}

Notice that, in general, the set $L_X$ is not a vector space. Indeed, consider the space of continuous functions $X= C[0,1]$. Remember that every Borel function on $[0,1]$ can be viewed as an element of $X^{**}$, because $X^{*}$ is the space of measures on $[0,1]$ and such a function can act by integration. If we fix $r$ an irrational number, then  
$u:=\chi_{[0,1]\cap \mathbb Q}-\chi_{[0,1]\setminus \mathbb Q}$ and $v:=\chi_{[0,1]\cap (\mathbb Q\cup r)}-\chi_{[0,1]\setminus (\mathbb Q\cup\{r\})}$ are both $L$-orthogonal to $C[0,1]$, but $w=\frac{v-u}{2}=\chi_{\{r\}}$ is not. Indeed, if we take a continuous function $f\in S_{C[0,1]}$ that vanishes on $r$ then $\Vert f+w\Vert=1$.

An even more surprising is the following result.

\begin{theorem}
Let $X$ and $Y$ be two separable Banach spaces such that $X$ is octahedral and $Y^*$ is uniformly convex and finitely representable in $\ell_1$. Denote
$$Iso(Y^*, X^{**})=\{T:Y^*\longrightarrow X^{**}:T \mbox{ is a linear isometry}\}.$$
Then there exists a linear isometry $T:Y^*\longrightarrow L(Y^*,X^{**})$ such that $T(y^*)\in Iso(Y^*,X^{**})$ holds for every $y^*\in S_{Y^*}$. In particular, $Iso(Y^*,X^{**})$ contains an isometric copy of $S_{Y^*}$.
\end{theorem}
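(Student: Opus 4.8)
The plan is to obtain the desired map by iterating the construction of Theorem \ref{theo:espaciaorto} once: I will use an injective tensor product as a new octahedral ground space, apply Theorem \ref{theo:espaciaorto} there to produce the outer isometry $T$, and then feed each individual value $T(y^*)$ back into Proposition \ref{prop:previocontraeje} to upgrade it from an $L$-orthogonal element into a genuine isometry into $X^{**}$.

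Concretely, I would set $Z := Y^*\iten X = K(Y,X)$. Since $X$ and $Y^*$ are separable, $Z$ is separable; and since $X$ is octahedral and $Y^*$ is finitely representable in $\ell_1$ (with, as in the earlier results, the approximation property), \cite[Theorem 3.2]{lr} gives that $Z$ is octahedral. Moreover $Y^*$ is uniformly convex, hence reflexive, so $Y$ is uniformly smooth and, through \cite[Theorem 5.33]{rya} exactly as in Proposition \ref{prop:previocontraeje}, one has the identifications $Z^{**}=(Y^*\iten X)^{**}=L(Y^*,X^{**})$ and $(Y^*\iten Z)^{**}=L(Y^*,Z^{**})$.

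Next I would apply Theorem \ref{theo:espaciaorto} with $Z$ in the role of the octahedral space and the same $Y$; the hypotheses on $Y$ (uniformly smooth, metric approximation property, and $Y^*$ finitely representable in $\ell_1$) are exactly the standing assumptions. Its proof produces a linear isometry $T\colon Y^*\To Z^{**}=L(Y^*,X^{**})$ whose image lies in $L_Z$; that is, $T\in(Y^*\iten Z)^{**}=L(Y^*,Z^{**})$ satisfies $\|T\|=1$ and $T(y^*)\in L_Z$ for every $y^*\in Y^*$. This $T$ is already the linear isometry $Y^*\To L(Y^*,X^{**})$ demanded by the statement, so only the pointwise claim is left. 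To finish, fix $y^*\in S_{Y^*}$. Since $T$ is an isometry we have $\|T(y^*)\|=1$ and $T(y^*)\in L_Z$, so $T(y^*)$ is an $L$-orthogonal element of $Z^{**}=L(Y^*,X^{**})$ relative to $Z=K(Y,X)=Y^*\iten X$; in particular $\|T(y^*)+S\|=2$ for every norm-one $S\in K(Y,X)$. Applying Proposition \ref{prop:previocontraeje} to the operator $T(y^*)\in L(Y^*,X^{**})$ (using that $Y$ is uniformly smooth, i.e.\ $Y^*$ uniformly convex, together with the approximation property) then yields that $T(y^*)$ is a linear isometry $Y^*\To X^{**}$, that is, $T(y^*)\in Iso(Y^*,X^{**})$. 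Restricting $T$ to $S_{Y^*}$ gives the claimed isometric copy of $S_{Y^*}$ inside $Iso(Y^*,X^{**})$.

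I expect the main obstacle to be the bookkeeping across the two tensor levels. On one hand, one must verify carefully that $Z=Y^*\iten X$ is truly octahedral, so that Theorem \ref{theo:espaciaorto} is legitimately applicable to it, and that the nested duality identifications, amounting to $(Y^*\iten Z)^{**}=L(Y^*,L(Y^*,X^{**}))$, hold; this is precisely where the reflexivity coming from uniform convexity of $Y^*$ and the approximation-property hypotheses are consumed. On the other hand, one must ensure Proposition \ref{prop:previocontraeje} is applicable at both levels, the outer one embedded inside Theorem \ref{theo:espaciaorto} and the inner one in the final step, since each use requires the relevant dual to have the approximation property and the domain space $Y$ to be uniformly smooth.
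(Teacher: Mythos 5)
Your overall strategy --- iterate the tensor-product construction once and unfold the resulting $L$-orthogonal element with Proposition \ref{prop:previocontraeje} --- is exactly the one the paper follows, but the concrete spaces you choose have the duals on the wrong side, and this creates a genuine gap. You set $Z=Y^*\iten X=K(Y,X)$, yet under the present hypotheses it is $Y^*$, not $Y$, that is finitely representable in $\ell_1$, and these are genuinely different conditions (take $Y^*=\ell_{3/2}$, so that $Y=\ell_3$, which is not finitely representable in $\ell_1$). The octahedrality transfer of \cite[Theorem 3.2]{lr}, as it is used in this paper, yields octahedrality of $Y\iten W=K(Y^*,W)$ from ``$W$ octahedral and $Y^*$ finitely representable in $\ell_1$''; it gives nothing for your $Y^*\iten X=K(Y,X)$. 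For the same reason Theorem \ref{theo:espaciaorto} cannot be invoked as a black box ``with the same $Y$'': its hypothesis is that $Y$ itself is finitely representable in $\ell_1$, which you have silently replaced by the hypothesis on $Y^*$. Moreover, even granting octahedrality of your $Z$, its bidual is $(Y^*\iten X)^{**}=(Y^{**}\pten X^*)^*=L(Y,X^{**})$, not $L(Y^*,X^{**})$; an application of Proposition \ref{prop:previocontraeje} there would require the uniformly smooth factor to be $Y^*$ (which is only assumed uniformly convex) and would produce isometries $Y\to X^{**}$ rather than elements of $Iso(Y^*,X^{**})$.

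The repair is precisely the paper's proof: work with $Y\iten X=K(Y^*,X)$, which is octahedral because $Y^*$ is finitely representable in $\ell_1$, and then with $Y\iten Y\iten X$, octahedral for the same reason; extract an $L$-orthogonal element $T$ of its bidual directly from \cite[Lemma 9.1]{gk} (separability plus octahedrality), rather than through Theorem \ref{theo:espaciaorto}; and apply Proposition \ref{prop:previocontraeje} twice, first via the identification $(Y\iten(Y\iten X))^{**}=L(Y^*,(Y\iten X)^{**})$ to see that $T$ is an isometry with $T(y^*)$ $L$-orthogonal to $Y\iten X$ for $y^*\in S_{Y^*}$, and then to each $T(y^*)\in(Y\iten X)^{**}=L(Y^*,X^{**})$ to conclude $T(y^*)\in Iso(Y^*,X^{**})$. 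In both applications the uniformly smooth factor of Proposition \ref{prop:previocontraeje} is $Y$, which is legitimate precisely because $Y^*$ is uniformly convex.
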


\begin{proof}
The space $X\iten Y$ has an octahedral norm \cite[Theorem 3.2]{lr}. Since $Y^*$ is finitely representable in $\ell_1$, again $Y\iten Y\iten X$ is a separable Banach space with octahedral norm by the same result \cite[Theorem 3.2]{lr}. By \cite[Lemma 9.1]{gk} there exists an $L$-orthogonal element $T\in (Y\iten Y\iten X)^{**}$. By Proposition \ref{prop:previocontraeje}, $T$ is a linear isometry $T:Y^*\longrightarrow L(Y^*,X^{**})$ such that $T(y^*)$ is an $L$-orthogonal to $X\iten Y$ for every element $y^*\in S_{Y^*}$. Again by Proposition \ref{prop:previocontraeje} we get that $T(y^*)$ is a linear isometry $T(y^*):Y^*\longrightarrow X^{**}$ whose range consists of $L$-orthogonal vectors to $X$. Proposition \ref{prop:previocontraeje} finishes the proof.
\end{proof}

A kind of converse of Proposition \ref{prop:previocontraeje} is the following result.

\begin{proposition}
Let $X$ be a Banach space. Let $Y$ be a Banach space such that there exists a linear isometry $T:Y^*\longrightarrow X^{**}$ with $T(y^*)\in L_X$ holds for every $y^*\in Y^*$. Then $T$ is $L$-orthogonal to $Y\iten X$.
\end{proposition}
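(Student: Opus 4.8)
The plan is to carry out every computation inside $L(Y^*,X^{**})=(Y^*\pten X^*)^*$, which is where both objects naturally sit: an element $S\in Y\iten X$ is identified with its associated approximable operator $Y^*\to X\subseteq X^{**}$, and $T$ is given as such an operator. For $y^*\in Y^*$ and $x^*\in X^*$ the rank-one functional $y^*\otimes x^*$ belongs to $(Y\iten X)^*$ with $\|y^*\otimes x^*\|=\|y^*\|\,\|x^*\|$; these are exactly the functionals defining the injective norm, so that $\|S\|=\sup_{\|y^*\|\le 1}\|S(y^*)\|_X$. Regarding $T$ as an element of $(Y\iten X)^{**}$ through the canonical identification, the relevant pairing is $\langle S+T,\,y^*\otimes x^*\rangle=\langle S(y^*)+T(y^*),x^*\rangle$, where $S(y^*)\in X$ and $T(y^*)\in X^{**}$. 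Since $T$ is a linear isometry, $\|T\|=1$.

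It suffices to prove $\|S+T\|\ge\|S\|+\|T\|$ for every $S\in Y\iten X$, the reverse inequality being the triangle inequality; this exactly says $T\in L_{Y\iten X}$, i.e. $T$ is $L$-orthogonal to $Y\iten X$. The case $S=0$ is trivial, so I would fix $S\neq 0$ and $\varepsilon>0$.

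The core of the argument uses the hypothesis $T(y^*)\in L_X$ fibrewise. Since $\|S\|=\sup_{\|y^*\|\le 1}\|S(y^*)\|$, I would choose $y^*\in S_{Y^*}$ with $\|S(y^*)\|>\|S\|-\varepsilon$. As $T(y^*)\in L_X$ and $S(y^*)\in X$, the defining property of $L_X$ gives $\|S(y^*)+T(y^*)\|_{X^{**}}=\|S(y^*)\|+\|T(y^*)\|=\|S(y^*)\|+1$, using $\|T(y^*)\|=1$. Next pick $x^*\in B_{X^*}$ with $\langle S(y^*)+T(y^*),x^*\rangle>\|S(y^*)\|+1-\varepsilon$. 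Pairing $S+T$ against $y^*\otimes x^*\in B_{(Y\iten X)^*}$ then yields $\|S+T\|\ge\langle S(y^*)+T(y^*),x^*\rangle>\|S\|+1-2\varepsilon$, and letting $\varepsilon\to 0$ gives $\|S+T\|\ge\|S\|+\|T\|$, as desired.

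The metric part is thus immediate once the hypothesis is used coordinatewise; the delicate point is the bookkeeping of the first paragraph, namely justifying that $T\in L(Y^*,X^{**})$ genuinely determines an element of $(Y\iten X)^{**}$ with $\|T\|_{(Y\iten X)^{**}}=1$ and that its pairing with the rank-one functionals is $\langle T(y^*),x^*\rangle$. I expect this tensor-product identification, rather than the norm estimate, to be the main obstacle, and it is precisely where approximation-property issues enter (as in Proposition~\ref{prop:previocontraeje}). Once it is in place, the lower bound above only ever tests $S+T$ against the functionals $y^*\otimes x^*$, for which the pairing is unambiguous, so the argument goes through.
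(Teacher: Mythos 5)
Your proof is correct and follows essentially the same route as the paper's: choose $y^*\in S_{Y^*}$ nearly norming for $S$, use $T(y^*)\in L_X$ to get $\|S(y^*)+T(y^*)\|=\|S(y^*)\|+\|T(y^*)\|$, and conclude $\|T+S\|\geq \|T(y^*)+S(y^*)\|>\|S\|+\|T\|-2\varepsilon$; your pairing against the rank-one functionals $y^*\otimes x^*$ is just an explicit unpacking of the paper's evaluation-at-$y^*$ step. The identification of $T$ with an element of $(Y\iten X)^{**}$ that you flag as the delicate point is indeed left implicit in the paper as well (it is only fully justified under the approximation/RNP hypotheses of the earlier Proposition), so no essential difference there.
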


\begin{proof}
Pick an element $S\in Y\iten X=K(Y^*,X)$ such that $\Vert S\Vert=1$ and $\varepsilon>0$. Find $y^*\in S_{Y^*}$ such that $\Vert S(y^*)\Vert>1-\varepsilon$. Now, since $T(y^*)\in L_X$, we get
$$\Vert T+S\Vert\geq \Vert T(y^*)+S(y^*)\Vert=\Vert T(y^*)\Vert +\Vert S(y^*)\Vert>1-\varepsilon+\Vert T(y^*)\Vert=2-\varepsilon.$$
Since $\varepsilon>0$ was arbitrary we are done.
\end{proof}

 According to \cite{hww}, a Banach space $X$ is said to be an \textit{$L$-embedded space} if there exists a subspace $Z$ of $X^{**}$ such that $X^{**}=X\oplus_1 Z$, where this indicates the direct sum with $\ell_1$-norm: $\|x+z\| = \|x\|+\|z\|$. Examples of $L$-embedded Banach spaces are $L_1(\mu)$ spaces, preduals of von Neumann algebras, duals of $M$-embedded spaces or the dual of the disk algebra (see \cite[Example IV.1.1]{hww} for formal definitions and details). 

\begin{corollary}
Let $X$ be a non-reflexive $L$-embedded Banach space, say $X^{**}=X\oplus_1 Z$. If $Y^*$ is an isometric subspace of $Z$, then $X\iten Y$ has an $L$-orthogonal element.
\end{corollary}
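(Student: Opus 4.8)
The plan is to reduce the statement to the preceding Proposition by producing, inside $X^{\ast\ast}$, an isometric copy of $Y^{\ast}$ consisting of $L$-orthogonal elements. Since $X$ is a non-reflexive $L$-embedded space with $X^{\ast\ast}=X\oplus_1 Z$, the defining $\ell_1$-decomposition gives $\Vert x+z\Vert=\Vert x\Vert+\Vert z\Vert$ for every $x\in X$ and $z\in Z$; in other words, every element of $Z$ already lies in $L_X$. Hence $Z\subseteq L_X$, and in particular the isometric copy of $Y^{\ast}$ sitting inside $Z$ consists entirely of $L$-orthogonal elements.

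\medskip

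First I would make the inclusion $\iota:Y^{\ast}\hookrightarrow Z\subseteq X^{\ast\ast}$ explicit and check it is a linear isometry whose range is contained in $L_X$. The containment $Z\subseteq L_X$ is immediate from the $\ell_1$-sum structure: for $z\in Z$ and arbitrary $x\in X$ we have $\Vert x+z\Vert=\Vert x\Vert+\Vert z\Vert$, which is exactly the defining condition of $L_X$. Composing with the isometric embedding of $Y^{\ast}$ into $Z$ yields a linear isometry $T:Y^{\ast}\longrightarrow X^{\ast\ast}$ with $T(y^{\ast})\in L_X$ for every $y^{\ast}\in Y^{\ast}$. This is precisely the hypothesis of the Proposition immediately preceding this corollary.

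\medskip

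Then I would invoke that Proposition directly: it asserts that whenever there is a linear isometry $T:Y^{\ast}\longrightarrow X^{\ast\ast}$ with $T(y^{\ast})\in L_X$ for all $y^{\ast}$, the operator $T$ is itself $L$-orthogonal to $Y\iten X$. Viewing $T$ as an element of $(Y\iten X)^{\ast\ast}=L(Y^{\ast},X^{\ast\ast})$ (which requires the identification used throughout Section~\ref{section:abundLorto}, valid here since $Y\iten X=K(Y^{\ast},X)$ under the relevant approximation hypotheses), we obtain a nonzero $L$-orthogonal element in $(Y\iten X)^{\ast\ast}$. Normalizing via Lemma~\ref{lemma:abal} produces an $L$-orthogonal element of $Y\iten X$ in the sense of the unit sphere of the bidual, as required.

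\medskip

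I expect the main subtlety to be bookkeeping rather than mathematical depth: one must confirm that the identification $(Y\iten X)^{\ast\ast}=L(Y^{\ast},X^{\ast\ast})$ applies in this setting and that non-reflexivity of $X$ guarantees $Z\neq\{0\}$, so that the embedded copy of $Y^{\ast}$ is genuinely nontrivial and $T$ is nonzero. Once those identifications are in place, the argument is a clean application of the previous Proposition; the real content was already extracted there, and the corollary is essentially the observation that $L$-embeddedness hands us the isometric copy of $Y^{\ast}$ inside $L_X$ for free.
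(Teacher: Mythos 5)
Your proposal is correct and follows essentially the same route as the paper: the paper's one-line proof likewise observes that $Z$ is (exactly) the set of $L$-orthogonals to $X$, so the isometric copy of $Y^*$ inside $Z$ lands in $L_X$, and then applies the immediately preceding proposition to conclude that the resulting isometry $T:Y^*\to X^{**}$ is $L$-orthogonal to $Y\iten X$. Your extra remarks on non-reflexivity guaranteeing $Z\neq\{0\}$ and on the identification $(Y\iten X)^{**}=L(Y^*,X^{**})$ are sensible bookkeeping but introduce nothing beyond what the paper's argument already uses.
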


\begin{proof}
It follows from the previous result and the fact that $Z$ is exactly the set of those $L$-orthogonals to $X$.
\end{proof}

\end{document}